\newtheorem{theorem}{Theorem}[section]
\newtheorem{corollary}[theorem]{Corollary}
\newtheorem{lemma}[theorem]{Lemma}
\newtheorem{claim}{Claim}[theorem]
\newtheorem{proposition}[theorem]{Proposition}
\theoremstyle{definition}
\newtheorem{definition}[theorem]{Definition}
\newtheorem{example}[theorem]{Example}
\newtheorem{remark}[theorem]{Remark}
\title{On connectedness in the parametric geometry of numbers}
\author{Yuming Wei}
\address{Yau Mathematical Sciences Center, Tsinghua University, Beijing, 100084, China }
\email{weiym20@mails.tsinghua.edu.cn}
\author{Han Zhang}
\address{School of Mathematical Science, Soochow University, Suzhou 215006, China
}
\email{hanzhang2013@outlook.com}
\date{}
\begin{document}

\begin{abstract}
    Via multilinear algebra, we formulate a criterion for connectedness in the parametric geometry of numbers in terms of pencils, which are certain algebraic varieties in the space of matrices. As a consequence, we obtain a connectedness result for generic lattices arising from Diophantine approximation on analytic submanifolds, and sharpen Schmidt and Summerer's results of connectedness on simultaneous Diophantine approximation and approximation by linear forms.

\end{abstract}

\keywords{Geometry of numbers, connectedness, pencils}


\subjclass[MSC Classification]{11H06, 11J13, 37B05}

\maketitle

\section{Introduction}\label{Section: Introduction}

\subsection{Background}
Let $d\geq 2$ be an integer. It is well known that the space of unimodular lattices in $\mathbb{R}^d$ can be identified with $\rm{SL}_d(\mathbb{R})/\rm{SL}_d(\mathbb{Z})$. Let $m,n\geq 1$ be integers such that $m+n=d$. We denote by $M_{m,n}(\mathbb{R})$ the set of $m\times n$ real matrices. Because in this article we only consider real matrices, we write $M_{m,n}(\mathbb{R})=M_{m,n}$ for simplicity. Given $x\in M_{m,n}$, we consider the upper triangular matrix 
\begin{align}\label{align: definition of u(x)}
    u(x)=\begin{pmatrix}
        \mathbb{I}_m & x \\
        0  & \mathbb{I}_n
\end{pmatrix},
\end{align}
where $\mathbb{I}_m$ ($\mathbb{I}_n$, resp.) is the $m\times m$ ($n\times n$, resp.) identity matrix. Let $\Lambda_x$ be the unimodular lattice defined by 
\[\Lambda_x=u(x)\mathbb{Z}^d.\]

\begin{definition}
    A vector $\omega=(\omega_1,\cdots,\omega_d)\in \mathbb{R}^d$ is called an $(m,n)$-\textit{weight vector} if $\omega_1\geq \cdots\geq \omega_m>0$, $\omega_{m+1}\geq \cdots\geq \omega_d>0$ and $\sum_{i=1}^m \omega_i=\sum_{j=m+1}^d \omega_{j}=1$. {We say} $\omega$ is of \textit{equal weight} if $\omega_1=\cdots=\omega_{m}=1/m$ and $\omega_{m+1}=\cdots=\omega_d=1/n$.
\end{definition}
Given an $(m,n)$-weight vector $\omega$, we define a one-parameter diagonal flow $\{a^{\omega}_t:t\in \mathbb{R}\}\subset \rm{SL}_d(\mathbb{R})$ by
\begin{align}\label{align: definition of a_t}
    a^{\omega}_t=\rm{diag} (e^{\omega_1 t},\cdots, e^{\omega_m t}, e^{-\omega_{m+1} t},\cdots, e^{- \omega_{d} t}).
\end{align}
For $1\leq i\leq d$, the $i$th successive minimum $\lambda_i(a^{\omega}_t \Lambda_x)$ is the least number $\lambda>0$ for which $\lambda \mathcal{C}$ contains $i$ linearly independent vectors in $a^{\omega}_t \Lambda_x$, where $\mathcal{C}$ is the closed unit cube of $\mathbb{R}^d$ centered at $0$. Since the discovery of Dani's correspondence \cite{Dani_1985_Divergent_trajectories_of_flows_on_homogeneous_spaces_MR794799}, {the first of the successive minima} of the forward trajectory $\{a^{\omega}_t \Lambda_x:t\geq 0\}$ was intensively studied from the dynamical point of view, while the other successive minima were less studied.

In two important papers of Schmidt and Summerer \cite{Schmidt_Summerer_2009_Parametric_geometry_of_numbers_and_applications_MR2557854,Schmidt_Summerer_2013_Diophantine_approximation_and_parametric_geometry_of_numbers_MR3016519}, they proposed the study of {the} parametric geometry of numbers, that is, one should investigate all successive minima simultaneously. Using this new theory, they sharpened many classical results in Diophantine approximation. Schmidt and Summerer's idea was developed in a series of papers (see e.g. \cite{Bugeaud_Laurent_2010_On+transfer+inequalities+in+Diophantine+approximation+II_MR2609309,German_2012_On+Diophantine+exponents+and+Khintchine's+transference+principle_MR2988525,Moshchevitin_2012_Exponents+for+three+dimensional+simultaneous+Diophantine+approximations_MR2899740}), and finally culminated in the fundamental work of Roy \cite{Roy_2015_On_Schmidt_and_Summerer_parametric_goemMR3418530,Roy_2016_Spectrum_of_the_exponents_of_best_rational_approximation_MR3489062}. Roy's result was further developed in \cite{Das_Fishman_Simmons_Urbanski_2024_A+variational+principle_MR4671568,Solan_2021_Parametric+geometry+of+numbers+with+general+flow}.

In order to study successive minima, given a unimodular lattice $\Lambda$ and an $(m,n)$-weight vector $\omega$, it will be convenient for us to define functions
\begin{align*}
    &L^{\omega}_{i}(\Lambda,t)=\log \lambda_i (a^{\omega}_t \Lambda), \quad \forall 1\leq i\leq d, \text{ and }\\
    & \boldsymbol{L}^{\omega}(\Lambda,t)=(L^{\omega}_{1}(\Lambda,t),\cdots, L^{\omega}_{d}(\Lambda,t)):\mathbb{R}_{\geq 0} \to \mathbb{R}^d.
\end{align*}
By the definition of $a^{\omega}_t$, each $L^{\omega}_{i}(\Lambda,t)$ is a piecewise linear function with slope lying in $\{\omega_1,\cdots,\omega_m,-\omega_{m+1},\cdots,-\omega_d\}$. One particular interest in the parametric geometry of numbers is to investigate the following connectedness property of $ \boldsymbol{L}^{\omega}(\Lambda,\cdot)$:
\begin{definition}\label{Definition: connectedness}
Given $k\in \{1,\cdots,d-1\}$ and a unimodular lattice $\Lambda$, $\boldsymbol{L}^{\omega}(\Lambda,\cdot)$ is {connected} at $k$ if the set
\begin{align*}\label{align: the set of t where two successive minimas are equal}
    \{t\geq 0 : L^{\omega}_{k}(\Lambda,t)=L^{\omega}_{k+1}(\Lambda,t)\}
\end{align*}
is unbounded above. $\boldsymbol{L}^{\omega}(\Lambda,\cdot)$ is connected if it is connected at every $1\leq k\leq d-1$.  $\boldsymbol{L}^{\omega}(\Lambda,\cdot)$ is disconnected at $k$ if $L^{\omega}_{k}(\Lambda,t)<L^{\omega}_{k+1}(\Lambda,t)$ for all large $t$.
\end{definition}

\begin{remark}
    In an appropriate sense, connectedness measures rationality of a lattice. For example, let $d=2$, $x\in \mathbb{R}$ and $\omega=(1,1)$. Then $\boldsymbol{L}^{\omega}(\Lambda_{x},\cdot)$ is connected at $k=1$ if and only if $x\notin \mathbb{Q}$.
\end{remark}
Given a unimodular lattice $\Lambda$ and $1\leq k\leq d-1$, it is important to understand whether $\boldsymbol{L}^{\omega}(\Lambda,\cdot)$ is connected at $k$. As noted by Schmidt \cite[Proposition 4.2]{Schmidt_2020_On_parametric_geometry_of_numbers_MR4121876}, if $\boldsymbol{L}^{\omega}(\Lambda,\cdot)$ is disconnected at $k$, then the investigation of the parametric geometry of numbers with respect to $\Lambda$ breaks into two lower dimensional complementary sublattices $\Lambda^1$ and $\Lambda^2$ generating $\Lambda$.

In \cite{Schmidt_Summerer_2009_Parametric_geometry_of_numbers_and_applications_MR2557854}, a criterion for connectedness of $\boldsymbol{L}^{\omega}(\Lambda,\cdot)$ is given. To introduce this criterion, let us prepare some notations. For any $I\subset \{1,\cdots, d\}$, we associate to it the number
\[\mu_I=\sum_{i\in I,i\leq m} \omega_i-\sum_{i\in I, i\geq m+1}\omega_i.\]
Denote by $\pi_I:\mathbb{R}^d\to \mathbb{R}^I$ the orthogonal projection, where $\mathbb{R}^I$ is the linear subspace spanned by $\{e_i:i\in I\}$ and $e_i$ is the $i$th standard basis vector of $\mathbb{R}^d$. All vectors in this article are considered to be column vectors unless specified. For a column vector $w\in \mathbb{R}^d$, we denote $^t w$ to be its transpose. The following is the criterion:

\begin{theorem}\label{Theorem: geometric connectedness}\cite[Theorem 1.1]{Schmidt_Summerer_2009_Parametric_geometry_of_numbers_and_applications_MR2557854}
Given an $(m,n)$-weight vector $\omega$. Let $k\in \{1,\cdots, d-1\}$ and $\Lambda$ be a unimodular lattice. Suppose for every $k$-dimensional linear space $W\subset \mathbb{R}^d$ spanned by vectors of $\Lambda$, there is some $I\subset \{1,\cdots,d\}$ of cardinality $k$ such that $\mu_I>0$\footnote{In \cite[Theorem 1.1]{Schmidt_Summerer_2009_Parametric_geometry_of_numbers_and_applications_MR2557854}, the condition is $\mu_I<0$, this is because there $\boldsymbol{L}$ is defined with respect to $\Lambda$ and $a^{\omega}_{-t}\mathcal{C}$.} and $\pi_I(W)=\mathbb{R}^I$. Then $\boldsymbol{L}^{\omega}(\Lambda,\cdot)$ is connected at $k$.
\end{theorem}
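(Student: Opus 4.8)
The plan is to study the behavior of the successive minima $\lambda_i(a^\omega_t\Lambda)$ as $t\to\infty$ via the standard min-max description in terms of sublattices, and to translate the hypothesis on linear spaces $W$ spanned by lattice vectors into control of the growth rates of $\boldsymbol{L}^\omega(\Lambda,\cdot)$. Recall that for each $t$ one may choose a basis $v_1^{(t)},\dots,v_d^{(t)}$ of $\Lambda$ realizing the successive minima of $a^\omega_t\Lambda$ up to a bounded multiplicative constant (Mahler, Minkowski); by passing to a subsequence $t_\ell\to\infty$ we may assume that, for each $j$, the line $\mathbb{R}v_j^{(t_\ell)}$ (when suitably normalized) converges to a limit, so that the flag $W_j = \operatorname{span}(v_1^{(t_\ell)},\dots,v_j^{(t_\ell)})$ stabilizes to a fixed flag $0\subset W_1\subset\cdots\subset W_d=\mathbb{R}^d$ of rational subspaces of $\Lambda$. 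The key quantitative input is that for a rational subspace $W$ of dimension $j$, the $j$-volume of $a^\omega_t(W\cap\Lambda)$ grows, up to bounded factors, like $\exp(\nu_W(\omega)\, t)$ where $\nu_W(\omega)$ is the sum of the $j$ largest "$\omega$-exponents seen by $W$", and more precisely $\nu_W(\omega) = \max\{\mu_I : |I|=j,\ \pi_I(W)=\mathbb{R}^I\}$ — this is exactly the quantity appearing in the hypothesis. This identity is where the multilinear/exterior-algebra viewpoint enters: passing to $\bigwedge^j$, the covolume of $\bigwedge^j(W\cap\Lambda)$ under $\bigwedge^j a^\omega_t$ is governed by the largest exponent of $\bigwedge^j a^\omega_t$ that is not annihilated on the line $\bigwedge^j W$, and that exponent is precisely $\mu_I$ for the optimal admissible $I$.

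The first step I would carry out is to make this volume-growth estimate precise: show that $\tfrac1t\log\operatorname{vol}_j(a^\omega_t(W\cap\Lambda)) \to \nu_W(\omega)$, using that the sum $L_1^\omega(\Lambda,t)+\cdots+L_j^\omega(\Lambda,t)$ differs from $\min_W \log\operatorname{vol}_j(a^\omega_t(W\cap\Lambda))$ (the minimum over rational $W$ of dimension $j$) by a bounded amount, a classical consequence of Minkowski's second theorem together with the reduction theory of lattices. Next, I would use the hypothesis: for the limiting subspace $W_k$ of dimension $k$, there exists $I$ with $|I|=k$, $\pi_I(W_k)=\mathbb{R}^I$, and $\mu_I>0$; I want to conclude $\nu_{W_k}(\omega)>0$, i.e. $\operatorname{vol}_k$ of $a^\omega_t(W_k\cap\Lambda)$ grows, so $L_1^\omega+\cdots+L_k^\omega\to+\infty$. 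Symmetrically — and this is the crux — I need the complementary bound: the quotient lattice $\Lambda/(W_k\cap\Lambda)$, pushed by the induced flow, has covolume $\asymp \exp(-\nu_{W_k}(\omega)t)$ (since the total covolume is $1$), hence $L_{k+1}^\omega+\cdots+L_d^\omega \to -\infty$ at rate $-\nu_{W_k}(\omega)<0$. Combining, both $L_k^\omega(\Lambda,t)$ and $L_{k+1}^\omega(\Lambda,t)$ are forced to return near the "middle" level infinitely often, and a short interval/pigeonhole argument on the piecewise-linear graphs (slopes bounded by $\max_i\omega_i$) shows the set $\{t : L_k^\omega(\Lambda,t)=L_{k+1}^\omega(\Lambda,t)\}$ is unbounded.

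More carefully, the mechanism for the last step is this: suppose toward a contradiction that $\boldsymbol{L}^\omega(\Lambda,\cdot)$ is disconnected at $k$, so $L_k^\omega(\Lambda,t)<L_{k+1}^\omega(\Lambda,t)$ for all large $t$; then for all large $t$ there is a \emph{unique} $k$-dimensional rational subspace $W(t)$ realizing $\lambda_1\cdots\lambda_k$, and by discreteness $W(t)$ is eventually constant, equal to some fixed $W_\star$ spanned by $k$ vectors of $\Lambda$. For this $W_\star$ the hypothesis furnishes $I$ with $\mu_I>0$ and $\pi_I(W_\star)=\mathbb{R}^I$. On one hand $\operatorname{vol}_k(a^\omega_t(W_\star\cap\Lambda)) \asymp \exp(\nu_{W_\star}(\omega)t)$ with $\nu_{W_\star}(\omega)\ge\mu_I>0$, so $\lambda_1\cdots\lambda_k\to\infty$; on the other hand, once $L_k^\omega<L_{k+1}^\omega$ the first $k$ minima are all realized inside $W_\star$, forcing $\lambda_k(a^\omega_t\Lambda)$ itself $\to\infty$, whereas Minkowski's theorem bounds $\lambda_1\cdots\lambda_d$ from above, and one checks this contradicts $\lambda_{k+1}\ge\lambda_k\to\infty$ together with the growth of $\lambda_1\cdots\lambda_k$ — equivalently, the dual lattice picture shows $\lambda_{d-k}$ of the dual would have to tend to $0$ too fast. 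I expect the main obstacle to be precisely pinning down the identity $\nu_W(\omega)=\max\{\mu_I: |I|=|W|,\ \pi_I(W)=\mathbb{R}^I\}$ and doing so uniformly in $t$ with explicit bounded error — that is, making the exterior-algebra computation of the growth exponent rigorous rather than asymptotic — since everything else is then a soft argument about piecewise-linear functions with bounded slopes; this identity is also the natural bridge to the "pencil" formulation announced in the abstract.
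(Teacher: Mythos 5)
Your proposal is correct and follows essentially the same route as the paper: argue by contradiction, invoke the eventual constancy of the $k$-dimensional subspace realizing the first $k$ minima (the paper's Lemma \ref{Lemma: consequence of disconnectedness}, quoted from Schmidt--Summerer), identify the growth exponent of $\|a^{\omega}_t v_W\|$ with $\max\{\mu_I : \#I=k,\ \pi_I(W)=\mathbb{R}^I\}$ via the nonvanishing $e_I$-components of $v_W$ (the paper's Lemma \ref{Lemma: projection is onto iff component is nonzero}), and contradict unimodularity. The only cosmetic differences are that the paper extracts the final contradiction from Mahler's compound body theorem together with $L^{\omega}_1(\Lambda^k,t)\le 0$, whereas you use Minkowski's second theorem on $\Lambda$ and the complementary product $\lambda_{k+1}\cdots\lambda_d$, and that the subsequence/limiting-flag setup in your first paragraph is never actually needed for the argument.
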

Theorem \ref{Theorem: geometric connectedness} is stated in a geometric way. However, it turns out that it is helpful to reformulate this theorem in terms of multilinear algebra. The main purpose of this article is to make use of techniques in multilinear algebra to obtain: (1) the relation between connectedness in the parametric geometry of numbers and certain algebraic varieties (i.e. pencils) in $M_{m,n}$; (2) a connectedness result of generic lattices associated to an analytic submanifold of $M_{m,n}$; (3) a sharpening of connectedness result of Schmidt and Summerer on simultaneous Diophantine approximation and approximation by linear forms.

\begin{remark}
By constructing an explicit example, Summerer \cite[Corollary 4.2]{Summerer_2019_Generalized+simultaneous+approximation_MR3990805} showed that the condition in Theorem \ref{Theorem: geometric connectedness} is not necessary for $\boldsymbol{L}^{\omega}(\Lambda,\cdot)$ to be connected at certain $k$. This indicates subtlety of connectedness property.

\end{remark}

\subsection{Main results}
 In what follows, we first reformulate Theorem \ref{Theorem: geometric connectedness} in the language of multilinear algebra. Given $k\in \{1,\cdots,d-1\}$, denote by $\bigwedge^k \mathbb{R}^d$ the $k$th exterior product of $\mathbb{R}^d$. Then $a^{\omega}_t$ naturally acts on $\bigwedge^k \mathbb{R}^d$. Since $a^{\omega}_t$ is diagonalizable, any vector $v\in \bigwedge^k \mathbb{R}^d$ can be decomposed into the sum of eigenvectors of $a^{\omega}_t$.
For any $k$-dimensional linear subspace $W\subset \mathbb{R}^d$ such that $W$ is spanned by vectors $w_1,\cdots,w_k$, let $v_W=w_1\wedge\cdots\wedge w_k$ represent $W$ in $\bigwedge^k \mathbb{R}^d$. We note that this representation is unique up to a scalar multiple.

\begin{theorem}\label{Theorem: algebraic connectedness}
Given an $(m,n)$-weight vector $\omega$, let $k\in \{1,\cdots,d-1\}$ and $\Lambda$ be a unimodular lattice. Suppose for every $k$-dimensional space $W$ spanned by vectors of $\Lambda$, the largest eigenvalue in the decomposition of $v_W$ into eigenvectors of $a^{\omega}_1$ in $\Lambda^k \mathbb{R}^d$ is strictly greater than $1$. Then $\boldsymbol{L}^{\omega}(\Lambda,\cdot)$ is connected at $k$.
\end{theorem}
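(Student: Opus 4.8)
The plan is to observe that the hypothesis of Theorem \ref{Theorem: algebraic connectedness} is merely a restatement, via Plücker coordinates, of the hypothesis of Theorem \ref{Theorem: geometric connectedness}, so that the conclusion is immediate from the latter. The only input needed is the standard dictionary between the vanishing of a Plücker coordinate of a subspace and the behaviour of the corresponding coordinate projection.

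First I would describe the eigenstructure of $a^\omega_1$ on $\bigwedge^k\mathbb{R}^d$. For $I=\{i_1<\cdots<i_k\}\subset\{1,\dots,d\}$ put $e_I=e_{i_1}\wedge\cdots\wedge e_{i_k}$; these vectors form a basis of $\bigwedge^k\mathbb{R}^d$. From \eqref{align: definition of a_t} one has $a^\omega_1 e_i=e^{\omega_i}e_i$ for $i\le m$ and $a^\omega_1 e_i=e^{-\omega_i}e_i$ for $i\ge m+1$, so each $e_I$ is an eigenvector of $a^\omega_1$ with eigenvalue $e^{\mu_I}$, where $\mu_I=\sum_{i\in I,\,i\le m}\omega_i-\sum_{i\in I,\,i\ge m+1}\omega_i$ is exactly the quantity defined just before Theorem \ref{Theorem: geometric connectedness}. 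Collecting the $e_I$ according to the value of $\mu_I$ yields the full eigenspace decomposition of $a^\omega_1$ on $\bigwedge^k\mathbb{R}^d$.

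Next, fix a $k$-dimensional subspace $W$ with basis $w_1,\dots,w_k$ and expand $v_W=w_1\wedge\cdots\wedge w_k=\sum_{|I|=k}c_I e_I$, so that $(c_I)$ are the Plücker coordinates of $W$. Writing $w_j=\sum_i a_{ij}e_i$ and $A=(a_{ij})_{1\le i\le d,\,1\le j\le k}$, multilinearity of the wedge product gives $c_I=\det A_I$, where $A_I$ denotes the $k\times k$ submatrix of $A$ with row set $I$. Hence $c_I\neq 0$ if and only if the rows of $A$ indexed by $I$ are linearly independent, equivalently if and only if $\pi_I|_W$ is injective, equivalently (comparing dimensions, both being $k$) if and only if $\pi_I(W)=\mathbb{R}^I$. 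Therefore, when $v_W$ is decomposed into eigenvectors of $a^\omega_1$, the eigenvalues that actually occur are exactly the $e^{\mu_I}$ with $\pi_I(W)=\mathbb{R}^I$, so the largest among them equals $\exp\bigl(\max\{\mu_I:\pi_I(W)=\mathbb{R}^I\}\bigr)$, and this exceeds $1$ precisely when there is some $I$ of cardinality $k$ with $\mu_I>0$ and $\pi_I(W)=\mathbb{R}^I$. Applying this equivalence to every $k$-dimensional $W$ spanned by vectors of $\Lambda$ shows that the hypothesis of Theorem \ref{Theorem: algebraic connectedness} is equivalent to that of Theorem \ref{Theorem: geometric connectedness}, whence the conclusion.

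There is no essential difficulty beyond this translation; the only points demanding a little care are the identity $c_I=\det A_I$ (a direct consequence of expanding the wedge product) and the bookkeeping that a single eigenvalue $e^{\mu_I}$ may be shared by several index sets $I$, so that \emph{the largest eigenvalue occurring in the decomposition of} $v_W$ must be understood as $\exp$ of the maximum of $\mu_I$ over those $I$ with $c_I\neq 0$.
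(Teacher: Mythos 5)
Your argument is correct, but it is not the route the paper takes. You reduce Theorem \ref{Theorem: algebraic connectedness} to Theorem \ref{Theorem: geometric connectedness} by establishing the Pl\"ucker-coordinate dictionary ($c_I=\det A_I\neq 0$ iff $\pi_I(W)=\mathbb{R}^I$) and then invoking Schmidt--Summerer's cited theorem as a black box. That dictionary is exactly the content of the paper's Lemma \ref{Lemma: projection is onto iff component is nonzero}, and your whole proposal is in substance the paper's proof of Proposition \ref{Proposition: Two theorems are equivalent}; since Theorem \ref{Theorem: geometric connectedness} is a correct result quoted from the literature, your deduction is logically valid and not circular. The paper, however, deliberately gives a direct, self-contained proof of Theorem \ref{Theorem: algebraic connectedness} instead: assuming disconnectedness at $k$, Lemma \ref{Lemma: consequence of disconnectedness} produces a single $k$-dimensional subspace $W$ of $\Lambda$ with $W_k(t)=a^{\omega}_tW$ for all large $t$; Mahler's compound body theorem (\ref{align: Mahler's theorem}) then identifies $L^{\omega}_1(\Lambda^k,t)$ with $L^{\omega}(v_W,t)$ up to a bounded error, and the eigenvalue hypothesis forces $L^{\omega}(v_W,t)\gtrsim t\mu/2\to\infty$, contradicting $L^{\omega}_1(\Lambda^k,t)\leq 0$ for the unimodular lattice $\Lambda^k$. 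What your approach buys is brevity; what the paper's approach buys is an actual mechanism (growth of the first minimum of the compound lattice versus Minkowski's bound) rather than an appeal to the geometric theorem, which is the ``algebraic'' proof the introduction promises. If your proof is to stand on its own, you should at least note explicitly that you are using Theorem \ref{Theorem: geometric connectedness} as an external input rather than proving the connectedness statement from first principles.
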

We will give a proof of Theorem \ref{Theorem: algebraic connectedness} in Section \ref{Section: proof of main theorem and corollary}, which is algebraic and similar to that of Schmidt and Summerer. It is not hard to see that
\begin{proposition}\label{Proposition: Two theorems are equivalent}
    Theorem \ref{Theorem: geometric connectedness} and Theorem \ref{Theorem: algebraic connectedness} are equivalent.
\end{proposition}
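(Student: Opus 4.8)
The plan is to show that the two hypotheses coincide verbatim once we spell out the eigenspace decomposition of $a^\omega_1$ acting on $\bigwedge^k\mathbb{R}^d$. First I would record that $a^\omega_1$ is diagonal in the standard basis, so $\bigwedge^k\mathbb{R}^d$ has a basis of eigenvectors $e_I = e_{i_1}\wedge\cdots\wedge e_{i_k}$ indexed by $k$-subsets $I=\{i_1<\cdots<i_k\}\subset\{1,\dots,d\}$, with eigenvalue $\prod_{i\in I,\,i\le m} e^{\omega_i}\cdot\prod_{i\in I,\,i\ge m+1} e^{-\omega_i} = e^{\mu_I}$. Hence for a $k$-dimensional subspace $W$ with Plücker representative $v_W=\sum_I c_I(W)\, e_I$, the decomposition of $v_W$ into $a^\omega_1$-eigenvectors is exactly $v_W=\sum_I c_I(W)e_I$, and the eigenvalue $e^{\mu_I}$ genuinely occurs in this decomposition precisely when the Plücker coordinate $c_I(W)\ne 0$. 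Since $x\mapsto e^x$ is increasing, the largest eigenvalue appearing in the decomposition is $\max\{e^{\mu_I} : c_I(W)\ne 0\}$, and this is strictly greater than $1$ if and only if there exists $I$ with $|I|=k$, $\mu_I>0$ and $c_I(W)\ne 0$.

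The remaining point is the dictionary between "$c_I(W)\ne 0$" and "$\pi_I(W)=\mathbb{R}^I$". This is a standard fact in exterior algebra: if $W$ is spanned by the rows (or columns) of a $k\times d$ matrix $A$ of rank $k$, then $c_I(W)$ is, up to the overall normalizing scalar, the $k\times k$ minor of $A$ on the columns indexed by $I$; that minor is nonzero exactly when those $k$ columns are linearly independent, which is exactly the condition that the coordinate projection $\pi_I\colon W\to\mathbb{R}^I$ is surjective (equivalently bijective, both spaces having dimension $k$). I would state this as a short lemma or simply cite it, then combine: for every $k$-dimensional $W$ spanned by vectors of $\Lambda$, the hypothesis of Theorem \ref{Theorem: algebraic connectedness} (largest eigenvalue $>1$) holds if and only if there is some $I$ with $|I|=k$, $\mu_I>0$, and $\pi_I(W)=\mathbb{R}^I$, which is the hypothesis of Theorem \ref{Theorem: geometric connectedness}. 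Since the conclusions of the two theorems are literally identical, the theorems are equivalent.

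I do not expect any serious obstacle here; the proposition is essentially a translation. The only mild subtlety to get right is bookkeeping: being careful that the overall scalar ambiguity in $v_W$ (it is only defined up to a nonzero multiple) does not affect which eigenvalues "appear" nor which Plücker coordinates vanish, and matching the sign convention for $\mu_I$ (noting the footnote in the excerpt, where the original Schmidt–Summerer statement uses $\mu_I<0$ because of a different normalization of $\boldsymbol{L}$) so that the direction of the flow is consistent between the two formulations.
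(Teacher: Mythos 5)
Your proposal is correct and follows essentially the same route as the paper: the paper isolates exactly your ``dictionary'' step as a lemma (the $e_I$-component of $v_W$ is nonzero if and only if $\pi_I(W)=\mathbb{R}^I$), proving it by expanding $w_i=\pi_I(w_i)+\pi_{I^c}(w_i)$ in the wedge product rather than via minors, and then concludes the equivalence of the hypotheses just as you do. The bookkeeping points you flag (scalar ambiguity of $v_W$, sign of $\mu_I$) are handled implicitly in the paper and pose no issue.
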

For completeness, a proof of this equivalence will be given in Section \ref{Section: Preliminaries}.
Although Theorem \ref{Theorem: algebraic connectedness} is a restatement of Theorem \ref{Theorem: geometric connectedness}, it allows us to analyze connectedness in the parametric geometry of numbers through pencils, which are certain algebraic varieties in $M_{m,n}$ and closely related to Schubert varieties in the grassmannian. The relation between pencils and diophantine approximation was revealed in \cite{Aka_Breuillard_Rosenzweig_Saxce_2018_Diophantine+approximation_MR3777412}. We give a brief introduction to pencils in the following and refer the interested reader to \cite{Aka_Breuillard_Rosenzweig_Saxce_2018_Diophantine+approximation_MR3777412} and the references therein for more details.

To begin with, for any $x\in M_{m,n}$, we denote by $\tilde{x}\in M_{m,d}$ the first $m$ rows of $u(x)$. For linear subspaces $W\subset \mathbb{R}^d$ and $W'\subset \langle e_1,\cdots, e_m \rangle \cong \mathbb{R}^m$, we consider the following subsets of $\{1,\cdots,d\}$:
\begin{align*}
    I(W)=\{i\in \{m+1,\cdots,d\} : \dim (W\cap \langle e_1,\cdots, e_i \rangle )>\dim (W\cap \langle e_1,\cdots,e_{i-1} \rangle)\};\\
    J(W')=\{j\in \{1,\cdots,m\} : \dim (W'\cap \langle e_{j+1},\cdots, e_m \rangle)<\dim (W' \cap \langle e_j,\cdots, e_m \rangle)\},
\end{align*}
where we interpret $\langle e_{j+1},\cdots,e_m \rangle=0$ if $j=m$. We define the following functions
\begin{align}\label{align: definition of psi and phi}
    \psi(W)=\sum_{i\in I(W)} \omega_i, \quad \phi(W')=\sum_{j\in J(W')} \omega_j.
\end{align}
By convention, we set $\psi(W)=0$ ($\phi(W')=0$, resp.) if $I(W)=\emptyset$ ($J(W')=\emptyset$, resp.).
\begin{remark}
Let $x\in M_{m,n}$ and $W$ be a linear subspace of $\mathbb{R}^d$.
Roughly speaking, $J(\Tilde{x}W)$ ($I(\ker \tilde{x}\cap W)$, resp.) records a set of basis vectors of $W\cap (\ker \Tilde{x})^{\perp}$ ($\ker \Tilde{x}\cap W$, resp.) satisfying the following property: each basis vector multiplied by $u(x)$ on the left has largest possible expanding rate (smallest possible contracting rate, resp.) under the action of $a^{\omega}_1$. For example, when $d=4$, $m=n=2$ and $x=0$. Let $w_1={^t(} 1,1,0,1), w_2={^t(}0,0,1,1)$ and $W=\langle w_1,w_2 \rangle$. Then $J(\Tilde{x}W)=\{1\},I(\ker\tilde{x}\cap W)=\{4\}$, and $\phi(\Tilde{x}W)=\omega_1,\psi(\ker\Tilde{x}\cap W)=\omega_4$.

When $\omega$ is of equal weight, $\phi(\tilde{x}W)= 1/m \cdot \dim \tilde{x}W$ and $\psi(\ker \Tilde{x}\cap W)=1/n \cdot\dim \ker \Tilde{x}\cap W$.
\end{remark}

\begin{definition}\label{Definition: pencils}
 Given an $(m,n)$-weight vector $\omega$ and $k\in\{1,\cdots,d-1\}$. Let $a,b\in \mathbb{R}$ and $W$ be a $k$-dimensional linear subspace of $\mathbb{R}^d$. A {$k$-pencil}\footnote{This definition of pencil is an adaption of that in \cite{Aka_Breuillard_Rosenzweig_Saxce_2018_Diophantine+approximation_MR3777412}.} $\mathcal{P}^{\omega}_{W,a,b}$ is defined by
  \begin{align*}
      \mathcal{P}^{\omega}_{W,a,b}=\{x\in M_{m, n}: \psi(\ker \tilde{x} \cap W)\geq a \text{ and } \phi(\Tilde{x}W)\leq b\}.
  \end{align*}
If $W$ is a $k$-dimensional linear subspace of $\mathbb{R}^d$ defined over $\mathbb{Q}$ (i.e., $W$ has a $\mathbb{Q}$-basis), then $\mathcal{P}^{\omega}_{W,a,b}$ is called a {rational} $k$-pencil. If $b\leq a$, then $\mathcal{P}^{\omega}_{W,a,b}$ is called a {weakly constraining} $k$-pencil. If $\mathcal{P}^{\omega}_{W,a,b}\neq M_{m,n}$, then $\mathcal{P}^{\omega}_{W,a,b}$ is {proper}.
\end{definition}

\begin{remark}
In Proposition \ref{Proposition: pencils are Zariski closed}, we will show that given an $(m,n)$-weight $\omega$ and $k\in \{1,\cdots,d-1\}$, any $k$-pencil is a Zariski closed algebraic variety (not necessarily irreducible) of $M_{m,n}$.

\end{remark}

We have the following result:
\begin{theorem}\label{Theorem: generic behavior of connectedness}
   Given an $(m,n)$-weight vector $\omega$ and $k\in \{1,\cdots,d-1\}$. Let $\mathcal{M}$ be a connected analytic submanifold of $M_{m,n}$. Assume that ${\mathcal{M}}$ is not contained in any proper rational weakly constraining $k$-pencil. Then for Lebesgue\footnote{Here by Lebesgue measure on an analytic manifold, we mean the natural volume measure.} almost every $x\in \mathcal{M}$, $\boldsymbol{L}^{\omega}(\Lambda_x,\cdot)$ is connected at $k$.
\end{theorem}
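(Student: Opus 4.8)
The plan is to deduce connectedness at $k$ for Lebesgue-almost every $x\in\mathcal{M}$ from the sufficient condition of Theorem~\ref{Theorem: algebraic connectedness}, after rephrasing that condition in terms of pencils. A $k$-dimensional subspace $W$ is spanned by vectors of $\Lambda_x=u(x)\mathbb{Z}^d$ if and only if $W=u(x)W_0$ for some rational $k$-dimensional subspace $W_0\subset\mathbb{R}^d$, and there are only countably many such $W_0$. For such a $W_0$ put
\begin{align*}
    \mu(u(x)W_0)=\max\{\mu_I:\,|I|=k,\,\pi_I(u(x)W_0)=\mathbb{R}^I\},
\end{align*}
which is the logarithm of the largest eigenvalue of $a^{\omega}_1$ occurring in the decomposition of $v_{u(x)W_0}$ (since $\pi_I(u(x)W_0)=\mathbb{R}^I$ exactly when the $e_I$-component of $v_{u(x)W_0}$ is nonzero, and that component is an $e^{\mu_I}$-eigenvector). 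The linear-algebraic input I would use — proved by taking echelon bases of $\tilde{x}W_0$ and of $\ker\tilde{x}\cap W_0$ and identifying the maximizing index set $I$, which is precisely what the remark following Definition~\ref{Definition: pencils} formalizes — is the identity
\begin{align*}
    \mu(u(x)W_0)=\phi(\tilde{x}W_0)-\psi(\ker\tilde{x}\cap W_0).
\end{align*}
Granting this, Theorem~\ref{Theorem: algebraic connectedness} reduces the claim to: for almost every $x\in\mathcal{M}$ and every rational $k$-dimensional $W_0$, one has $\phi(\tilde{x}W_0)>\psi(\ker\tilde{x}\cap W_0)$.

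Fix a rational $k$-dimensional $W_0$ and set $\mathcal{B}_{W_0}=\{x\in M_{m,n}:\psi(\ker\tilde{x}\cap W_0)\geq\phi(\tilde{x}W_0)\}$. Both $\psi(\ker\tilde{x}\cap W_0)$ and $\phi(\tilde{x}W_0)$ take only finitely many values (partial sums of the $\omega_i$), so evaluating $a=\psi(\ker\tilde{x}\cap W_0)$ at a point of $\mathcal{B}_{W_0}$ shows $\mathcal{B}_{W_0}=\bigcup_a\mathcal{P}^{\omega}_{W_0,a,a}$, a finite union over the possible values $a$; each $\mathcal{P}^{\omega}_{W_0,a,a}$ is a rational weakly constraining $k$-pencil, weakly constraining because its two parameters coincide. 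I would then check that each such pencil is proper: for $x$ outside a proper algebraic subset of $M_{m,n}$ one has $\dim(\ker\tilde{x}\cap W_0)=\max(0,k-m)$, so if $k\leq m$ then $\psi(\ker\tilde{x}\cap W_0)=0<\phi(\tilde{x}W_0)$, while if $k>m$ then $\tilde{x}W_0=\mathbb{R}^m$, whence $\phi(\tilde{x}W_0)=1$, whereas $\psi(\ker\tilde{x}\cap W_0)$ is a sum of $k-m<d-m$ of the weights $\omega_{m+1},\dots,\omega_d$ and is therefore $<1$. Thus a generic $x$ lies outside $\mathcal{B}_{W_0}$, so no $\mathcal{P}^{\omega}_{W_0,a,a}$ equals $M_{m,n}$; in other words the condition of Theorem~\ref{Theorem: algebraic connectedness} relative to $W_0$ holds for a Zariski-generic $x$, and the obstruction is cut out by proper pencils.

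To conclude, fix such a $W_0$ and a value $a$. By Proposition~\ref{Proposition: pencils are Zariski closed} the pencil $\mathcal{P}^{\omega}_{W_0,a,a}$ is the common zero set inside $M_{m,n}\cong\mathbb{R}^{mn}$ of a family of polynomials; since by hypothesis $\mathcal{M}$ is not contained in this proper rational weakly constraining $k$-pencil, one of those polynomials restricts to a real-analytic function on the connected analytic manifold $\mathcal{M}$ that does not vanish identically, and hence its zero set — which contains $\mathcal{M}\cap\mathcal{P}^{\omega}_{W_0,a,a}$ — has measure zero in $\mathcal{M}$. Taking the union over the finitely many $a$ and the countably many rational $k$-dimensional $W_0$, the set $\mathcal{M}\cap\bigcup_{W_0}\mathcal{B}_{W_0}$ has measure zero; and for every $x$ in its complement, $\phi(\tilde{x}W_0)>\psi(\ker\tilde{x}\cap W_0)$ holds for all rational $k$-dimensional $W_0$, so $\boldsymbol{L}^{\omega}(\Lambda_x,\cdot)$ is connected at $k$ by the reduction of the first paragraph.

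The step I expect to be the main obstacle is the Plücker-coordinate identity $\mu(u(x)W_0)=\phi(\tilde{x}W_0)-\psi(\ker\tilde{x}\cap W_0)$: one must determine which index set $I$ maximizes $\mu_I$ among those with $\pi_I(u(x)W_0)=\mathbb{R}^I$ and show it is assembled from the echelon positions recorded by $J(\tilde{x}W_0)$ and by $I(\ker\tilde{x}\cap W_0)$. A secondary point is the properness check, which relies on the (routine) fact that the subspaces $\ker\tilde{x}$, $x\in M_{m,n}$, form a Zariski-dense family in the Grassmannian, so that the generic-position value of $\dim(\ker\tilde{x}\cap W_0)$ is actually attained. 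Everything else — finiteness of the value sets of $\psi$ and $\phi$, countability of the rational subspaces, and the measure-zero property of zero sets of nontrivial real-analytic functions on connected manifolds — is standard.
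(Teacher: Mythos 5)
Your proof is correct and follows essentially the same route as the paper: reduce to Theorem \ref{Theorem: algebraic connectedness} via the eigenvalue identity of Lemma \ref{Lemma: highest eigenvalue determined by pencil}, express the bad set for each rational $W_0$ as a finite union of rational weakly constraining $k$-pencils, kill each $\mathcal{M}\cap\mathcal{P}^{\omega}_{W_0,a,a}$ using Zariski-closedness (Proposition \ref{Proposition: pencils are Zariski closed}) together with analyticity and connectedness of $\mathcal{M}$, and then take the countable union over rational $W_0$. Your explicit generic-position verification that every rational weakly constraining $k$-pencil is automatically proper is left implicit in the paper but is genuinely needed to invoke the hypothesis, so it is a welcome refinement rather than a deviation.
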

In particular, when $m=1$ we have the following:

\begin{corollary}\label{Corollary: a manifold not contained in affine plane is connected}
Given a $(1,d-1)$-weight vector $\omega$. Let $\mathcal{M}$ be a connected analytic submanifold of $M_{1,d-1}\cong \mathbb{R}^{d-1}$. Assume that ${\mathcal{M}}$ is not contained in any proper affine hyperplane of $\mathbb{R}^{d}$, then for Lebesgue almost every $x\in \mathcal{M}$, $\boldsymbol{L}^{\omega}(\Lambda_x,\cdot)$ is connected.
\end{corollary}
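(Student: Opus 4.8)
The plan is to obtain Corollary~\ref{Corollary: a manifold not contained in affine plane is connected} as a direct consequence of Theorem~\ref{Theorem: generic behavior of connectedness}, applied to each $k\in\{1,\dots,d-1\}$ in turn. If I can show that, when $m=1$, the hypothesis on $\mathcal M$ forces $\mathcal M$ not to be contained in any proper rational weakly constraining $k$-pencil, then for each $k$ Theorem~\ref{Theorem: generic behavior of connectedness} yields a conull set $\mathcal M_k\subseteq\mathcal M$ on which $\boldsymbol{L}^{\omega}(\Lambda_x,\cdot)$ is connected at $k$; intersecting the finitely many $\mathcal M_k$ gives a conull subset of $\mathcal M$ on which $\boldsymbol{L}^{\omega}(\Lambda_x,\cdot)$ is connected at every $k$, i.e. connected in the sense of Definition~\ref{Definition: connectedness}. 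So everything reduces to the pencil-freeness statement.

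To prepare for it I would record two things. First, the meaning of the hypothesis: identifying $x=(x_1,\dots,x_{d-1})\in M_{1,d-1}$ with $\tilde{x}=(1,x_1,\dots,x_{d-1})\in\mathbb{R}^d$, to say that $\mathcal M$ lies in no proper affine hyperplane of $\mathbb{R}^d$ is to say that $\{\tilde{x}:x\in\mathcal M\}$ spans $\mathbb{R}^d$ (equivalently, $\mathcal M$ affinely spans $\mathbb{R}^{d-1}$); in particular there is no nonzero $w\in\mathbb{R}^d$ with $\tilde{x}w=0$ for all $x\in\mathcal M$. Second, the shape of $\phi$ and $\psi$ from \eqref{align: definition of psi and phi} when $m=1$: since $\tilde{x}W\subseteq\langle e_1\rangle$ is at most one-dimensional, $\phi(\tilde{x}W)\in\{0,\omega_1\}=\{0,1\}$, and $\phi(\tilde{x}W)=0$ exactly when $W\subseteq\ker\tilde{x}$; and since $e_1\notin\ker\tilde{x}$, the set $I(\ker\tilde{x}\cap W)\subseteq\{2,\dots,d\}$ has cardinality $\dim(\ker\tilde{x}\cap W)$, whence $0\le\psi(\ker\tilde{x}\cap W)\le\omega_2+\dots+\omega_d=1$, with $\psi(\ker\tilde{x}\cap W)=1$ only if $\dim(\ker\tilde{x}\cap W)=d-1$.

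Now fix $k$ and suppose $\mathcal M\subseteq\mathcal{P}^{\omega}_{W,a,b}$ for some rational $k$-dimensional $W$ and $b\le a$. The key point is that the weakly-constraining inequality $b\le a$, together with the two bounds above, leaves $W\subseteq\ker\tilde{x}$ as the only possibility for $x\in\mathcal M$. Indeed, if $b\ge 1$ the constraint $\phi(\tilde{x}W)\le b$ is vacuous, so each $x\in\mathcal M$ satisfies $\psi(\ker\tilde{x}\cap W)\ge a\ge b\ge 1$; since $\psi\le 1$ this forces $\psi(\ker\tilde{x}\cap W)=1$, hence $\dim(\ker\tilde{x}\cap W)=d-1$, and since $\ker\tilde{x}\cap W\subseteq W$ with $\dim W=k\le d-1$ we get $k=d-1$ and $W\subseteq\ker\tilde{x}$. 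If instead $b<1$, the constraint $\phi(\tilde{x}W)\le b<1$ forces $\phi(\tilde{x}W)=0$, i.e. $W\subseteq\ker\tilde{x}$, for each $x\in\mathcal M$ directly. In either case, choosing any nonzero $w\in W$ (which exists since $k\ge 1$, and may be taken rational), we get $\tilde{x}w=0$ for all $x\in\mathcal M$, contradicting the hypothesis. This proves the pencil-freeness statement and hence the corollary.

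The only place any care is needed is the second paragraph: getting $I(\cdot)$, $J(\cdot)$, and hence $\phi$, $\psi$, exactly right in the one-row case — in particular that $\dim\tilde{x}W\le 1$ pins $\phi$ to $\{0,1\}$ and that $e_1\notin\ker\tilde{x}$ pins $\psi$ to $[0,1]$. Granting those, the argument is essentially the single observation that, under $b\le a$, such a $k$-pencil can only be the degenerate locus where $W$ sits inside $\ker\tilde{x}$, which is exactly the sort of degeneracy excluded by the spanning hypothesis; no input beyond Theorem~\ref{Theorem: generic behavior of connectedness} and the definitions is required.
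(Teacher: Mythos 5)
Your proof is correct and follows essentially the same route as the paper: reduce to Theorem~\ref{Theorem: generic behavior of connectedness} for each $k$, and show that for $m=1$ containment of $\mathcal{M}$ in a rational weakly constraining $k$-pencil forces $\mathcal{M}$ into a proper affine hyperplane. If anything, your case analysis on $b$ is sharper than the paper's at the key step: you deduce the full containment $W\subseteq\ker\tilde{x}$ for every $x\in\mathcal{M}$, which is what actually yields a \emph{single} hyperplane containing all of $\mathcal{M}$, whereas the paper only records $\psi(\ker\tilde{x}\cap W)>0$, i.e.\ that each individual $x$ annihilates some vector of $W$ possibly depending on $x$.
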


\begin{example}
    Let $\omega$ be a $(1,d-1)$-weight vector and $\mathcal{M}$ be the Veronese curve defined by
 \[\mathcal{M}=\{(s,\cdots,s^{d-1}): s\in \mathbb{R}\}\subset M_{1,d-1}.\]
It is not hard to see that ${\mathcal{M}}$ is not contained in any proper affine hyperplane of $\mathbb{R}^{d-1}$. By Corollary \ref{Corollary: a manifold not contained in affine plane is connected}, for Lebesgue almost every $x\in \mathcal{M}$, $\boldsymbol{L}^{\omega}(\Lambda_x,\cdot)$ is connected.
\end{example}

When the manifold $\mathcal{M}=\{x\}$ is a singleton, a sufficient condition for $x$ ensuring the connectedness of $\boldsymbol{L}^{\omega}(\Lambda_x,\cdot)$ can be given in terms of coordinates of $x$. In \cite{Schmidt_Summerer_2009_Parametric_geometry_of_numbers_and_applications_MR2557854}, Schmidt and Summerer showed the following: Let $x={^t(}\xi_1,\cdots,\xi_{d-1})\in M_{d-1,1}$ ($y=(\xi_1,\cdots,\xi_{d-1})\in M_{1,d-1}$, resp.). Assume $1,\xi_1,\cdots,\xi_{d-1}$ are linearly independent over $\mathbb{Q}$. Then given any $(d-1,1)$-weight vector $\omega$ ( $(1,d-1)$-weight vector $\omega'$, resp. ), $\{x\}$ ($\{y\}$, resp.) is not contained in any proper rational weakly constraining $k$-pencil for every $1\leq k\leq d-1$, and thus is connected at every $k$. By employing techniques in multilinear algebra, we obtain the following corollaries sharpening these results:

\begin{corollary}\label{Corollary: connectedness in 1 by d}
Given a {$(d-1,1)$}-weight vector $\omega$, $x={^t(}\xi_1,\cdots,\xi_{d-1})\in M_{d-1,1}$, and $k\in \{1,\cdots,d-1\}$. Assume there exists $\{n_1,\cdots,n_k\}\subset \{1,\cdots,d-1\}$ such that $1,\xi_{n_1},\cdots,\xi_{n_k}$ are linearly independent over $\mathbb{Q}$\footnote{{This is equivalent to saying that ${^t(}1,\xi_1,\cdots,\xi_{d-1})$ is not contained in any $k$-dimensional rational subspace $W$}.}. Then $\{x\}$ is not contained in any proper rational weakly constraining $k$-pencil, and $\boldsymbol{L}^{\omega}(\Lambda_x,\cdot)$ is connected at $k$. In particular, if $1,\xi_1,\cdots,\xi_{d-1}$ are linearly independent over $\mathbb{Q}$, then
$\boldsymbol{L}^{\omega}(\Lambda_{x},\cdot)$ is connected at every $1\leq k\leq d-1$.
\end{corollary}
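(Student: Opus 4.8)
The plan is to establish the two assertions of the corollary in turn. For the second one---connectedness of $\boldsymbol{L}^{\omega}(\Lambda_x,\cdot)$ at $k$---I would apply Theorem~\ref{Theorem: generic behavior of connectedness} to the zero-dimensional connected analytic submanifold $\mathcal{M}=\{x\}\subset M_{d-1,1}$: the hypothesis of that theorem is precisely the pencil-avoidance assertion we are proving, and a ``Lebesgue almost every'' conclusion over a single point is a conclusion about that point. The last sentence of the corollary is then immediate, since if $1,\xi_1,\dots,\xi_{d-1}$ are linearly independent over $\mathbb{Q}$ then so is any subfamily $1,\xi_{n_1},\dots,\xi_{n_k}$, so the hypothesis holds for every $k$. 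Thus everything reduces to showing: \emph{under the stated hypothesis, $x={}^t(\xi_1,\dots,\xi_{d-1})$ lies in no proper rational weakly constraining $k$-pencil.}

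To do this I would first unwind the definitions of $\psi$ and $\phi$ in the case $m=d-1$, $n=1$. Here $\omega_d=1$; the matrix $\tilde{x}$ is $(\mathbb{I}_{d-1}\mid x)\in M_{d-1,d}$, which has rank $d-1$; and $\ker\tilde{x}$ is the single line $\ell:=\mathbb{R}\cdot{}^t(-\xi_1,\dots,-\xi_{d-1},1)$, whose spanning vector has nonzero last coordinate. Hence, for any $k$-dimensional subspace $W\subseteq\mathbb{R}^d$: (a) $\ker\tilde{x}\cap W$ equals $\ell$ if $\ell\subseteq W$ and $\{0\}$ otherwise, and since $\{m+1,\dots,d\}=\{d\}$ is a jump set for $\ell$ exactly because its last coordinate is nonzero, $\psi(\ker\tilde{x}\cap W)$ equals $\omega_d=1$ in the first case and $0$ in the second; (b) if $\ell\not\subseteq W$ then $W\cap\ker\tilde{x}=0$, so $\tilde{x}$ is injective on $W$, $\dim\tilde{x}W=k\ge1$, and therefore $\phi(\tilde{x}W)=\sum_{j\in J(\tilde{x}W)}\omega_j>0$ because every $\omega_j>0$ and $|J(\tilde{x}W)|=\dim\tilde{x}W$.

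Now suppose, for contradiction, that $x\in\mathcal{P}^{\omega}_{W,a,b}$ for some rational $k$-dimensional $W$ and some $b\le a$. By the definition of the pencil and of weakly constraining,
\[\phi(\tilde{x}W)\le b\le a\le\psi(\ker\tilde{x}\cap W).\]
If $\ell\not\subseteq W$ this forces $0<\phi(\tilde{x}W)\le 0$, which is absurd; hence $\ell\subseteq W$. On the other hand, the coordinates of the spanning vector of $\ell$ are $-\xi_1,\dots,-\xi_{d-1},1$, which span the same $\mathbb{Q}$-subspace of $\mathbb{R}$ as $1,\xi_1,\dots,\xi_{d-1}$; so the smallest rational subspace of $\mathbb{R}^d$ containing $\ell$ has dimension $\dim_{\mathbb{Q}}\operatorname{span}_{\mathbb{Q}}\{1,\xi_1,\dots,\xi_{d-1}\}\ge k+1$, the lower bound coming from the hypothesis. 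This contradicts $\ell\subseteq W$ with $\dim W=k$. Hence $\{x\}$ is contained in no rational weakly constraining $k$-pencil at all, and in particular in no proper one, which completes the reduction.

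Modulo the machinery already in place, the only points that need a little care are: (i) confirming that Theorem~\ref{Theorem: generic behavior of connectedness} applies verbatim to the degenerate manifold $\mathcal{M}=\{x\}$---if one prefers not to rely on this, one can instead run the pencil-avoidance statement through the proof of that theorem, or argue directly from Theorem~\ref{Theorem: algebraic connectedness}; and (ii) the elementary fact that the smallest rational subspace of $\mathbb{R}^d$ containing a vector $v$ has dimension $\dim_{\mathbb{Q}}\operatorname{span}_{\mathbb{Q}}\{v_1,\dots,v_d\}$, proved by expanding $v$ in a $\mathbb{Q}$-basis of $\operatorname{span}_{\mathbb{Q}}\{v_1,\dots,v_d\}$ for one inequality and by reading off the coordinates of $v$ relative to a rational basis of any rational subspace through $v$ for the other. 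I expect (ii) to be the spot where a careful reader wants the details spelled out, although it is routine.
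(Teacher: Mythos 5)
Your argument is correct, but it takes a genuinely different route from the paper's. The paper works in $\bigwedge^k\mathbb{R}^d$: after a permutation reduction to the case where $1,\xi_1,\cdots,\xi_k$ are linearly independent over $\mathbb{Q}$, it computes the minors $\det A(I,J)$ of $A=u(x)$ explicitly, orders the sets $J$ with $d\in J$ and $\det v_W(J)\neq 0$ by the lexicographic-type order $\prec_k$, and uses the minimal such $J_0$ to produce an $I_0\subset\{1,\cdots,d-1\}$ whose $e_{I_0}$-coefficient in $u(x)v_W$ equals $c_0+\sum_{i\leq k}c_i\xi_i$ with some $c_{i_0}\neq 0$; linear independence makes this nonzero, and Lemma \ref{Lemma: highest eigenvalue determined by pencil} converts that into pencil avoidance. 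You instead exploit the special structure of $m=d-1$, $n=1$: $\ker\tilde{x}$ is the single line $\ell=\mathbb{R}\cdot{}^t(-\xi_1,\cdots,-\xi_{d-1},1)$, so $\psi(\ker\tilde{x}\cap W)\in\{0,1\}$ according to whether $\ell\subseteq W$, while $\phi(\tilde{x}W)>0$ whenever $\ell\not\subseteq W$; membership in a weakly constraining pencil therefore forces $\ell\subseteq W$, which is ruled out because the hypothesis says precisely that the $\mathbb{Q}$-span of the coordinates of a spanning vector of $\ell$ has dimension at least $k+1$, so $\ell$ lies in no rational $k$-dimensional subspace (this is the content of the footnote to the corollary). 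Your route is shorter and more transparent for this corollary; the paper's multilinear computation has the advantage of running uniformly in parallel with Corollary \ref{Corollary: connectedness in the dual version} and Proposition \ref{Proposition: matrix connectedness}, where the kernel of $\tilde{x}$ is no longer a line and no such shortcut is available. One point to clean up: invoking Theorem \ref{Theorem: generic behavior of connectedness} for the zero-dimensional manifold $\{x\}$ rests on the convention that the volume measure on a point is not the zero measure; the safer path, which you yourself indicate, is to note that pencil avoidance yields $\phi(\tilde{x}W)>\psi(\ker\tilde{x}\cap W)$ for every rational $k$-dimensional $W$ (take $a=b=\psi(\ker\tilde{x}\cap W)$) and then apply Lemma \ref{Lemma: highest eigenvalue determined by pencil} and Theorem \ref{Theorem: algebraic connectedness} directly, exactly as in the final paragraph of the paper's proof of Theorem \ref{Theorem: generic behavior of connectedness}.
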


\begin{corollary}\label{Corollary: connectedness in the dual version}
Given a {$(1,d-1)$}-weight vector $\omega$, $y=(\xi_1,\cdots,\xi_{d-1})\in M_{1,d-1}$, and $k\in \{1,\cdots,d-1\}$. Assume there exists $\{n_1,\cdots,n_k\}\subset \{1,\cdots,d-1\}$ such that $1,\xi_{n_1},\cdots,\xi_{n_k}$ are linearly independent over $\mathbb{Q}$. Then $\{y\}$ is not contained in any proper rational weakly constraining $(d-k)$-pencil, and $\boldsymbol{L}^{\omega}(\Lambda_y,\cdot)$ connected at $d-k$. In particular, if $1,\xi_1,\cdots,\xi_{d-1}$ are linearly independent over $\mathbb{Q}$, then $\boldsymbol{L}^{\omega}(\Lambda_{y},\cdot)$ is connected at every $1\leq k\leq d-1$.
\end{corollary}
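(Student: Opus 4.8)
The plan is to separate the statement into its combinatorial core --- that $\{y\}$ meets no proper rational weakly constraining $(d-k)$-pencil --- and a purely formal deduction of the rest. Granting that core, connectedness of $\boldsymbol{L}^{\omega}(\Lambda_y,\cdot)$ at $d-k$ is immediate from Theorem~\ref{Theorem: generic behavior of connectedness}, applied with its integer taken to be $d-k$ and with $\mathcal{M}=\{y\}$, a connected $0$-dimensional (hence analytic) submanifold of $M_{1,d-1}$; for a singleton, ``for Lebesgue almost every $x\in\mathcal{M}$'' simply means ``for $x=y$''. The concluding ``in particular'' assertion is then automatic: if $1,\xi_1,\dots,\xi_{d-1}$ are $\mathbb{Q}$-independent, the hypothesis holds for every $k\in\{1,\dots,d-1\}$, and $d-k$ ranges over all of $\{1,\dots,d-1\}$. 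Throughout I will use the reformulation recorded in the footnote to Corollary~\ref{Corollary: connectedness in 1 by d}: the hypothesis is equivalent to saying that ${}^t(1,\xi_1,\dots,\xi_{d-1})\in\mathbb{R}^d$ lies in no $k$-dimensional rational subspace.

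To prove the core statement I would exploit how explicit pencils become when $m=1$. Here $\omega_1=1$, the row $\tilde y=(1,\xi_1,\dots,\xi_{d-1})$ is nonzero and I regard it as the linear functional $v\mapsto\tilde y\,v$ on $\mathbb{R}^d$, so $\ker\tilde y=({}^t\tilde y)^{\perp}$ is a hyperplane. The one genuinely substantive observation is: if $W$ is any rational $(d-k)$-dimensional subspace of $\mathbb{R}^d$, then $W^{\perp}$ is a rational $k$-dimensional subspace, whence the hypothesis forces ${}^t\tilde y\notin W^{\perp}$, i.e.\ $W\not\subseteq\ker\tilde y$. Unwinding the definitions, this delivers both $\tilde yW=\langle e_1\rangle$ --- so $\phi(\tilde yW)=\omega_1=1$ --- and $\dim(\ker\tilde y\cap W)=(d-k)-1$, since $\ker\tilde y$ is a hyperplane not containing $W$.

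It remains to finish with a counting step on $I(\ker\tilde y\cap W)$. The index set $I(V)\subseteq\{2,\dots,d\}$ always has at most $\dim V$ elements, and $\dim(\ker\tilde y\cap W)=d-k-1\le d-2$ (here $k\ge1$ is used), so $I(\ker\tilde y\cap W)$ is a proper subset of $\{2,\dots,d\}$; as every $\omega_i$ with $i\ge2$ is strictly positive, $\psi(\ker\tilde y\cap W)<\sum_{i=2}^{d}\omega_i=1$. Consequently, were $y$ to lie in a weakly constraining pencil $\mathcal{P}^{\omega}_{W,a,b}$, we would get $a\le\psi(\ker\tilde y\cap W)<1$ and simultaneously $b\ge\phi(\tilde yW)=1$, contradicting $b\le a$; hence $y$ lies in no rational weakly constraining $(d-k)$-pencil at all, a fortiori in no proper one. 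I expect no real obstacle in the argument itself --- the whole content is the correct unwinding of $\phi$ and $\psi$ for $m=1$ --- but the step most in need of care is resisting a tempting shortcut: deducing the corollary from Corollary~\ref{Corollary: connectedness in 1 by d} via lattice duality. There one would use that the coordinate-reversed dual of $\Lambda_y$ equals $\Lambda_{\pm{}^t y}$ for the flipped $(d-1,1)$-weight $\omega'=(\omega_2,\dots,\omega_d,1)$, so that $L^{\omega}_{j}(\Lambda_y,\cdot)=-L^{\omega'}_{d+1-j}(\Lambda_{{}^t y},\cdot)+O(1)$ and ``connected at $d-k$ for $\Lambda_y$'' should correspond to ``connected at $k$ for $\Lambda_{{}^t y}$''; but the $O(1)$ ambiguity is incompatible with the exact equality $L_{d-k}=L_{d-k+1}$ demanded by Definition~\ref{Definition: connectedness}, so this heuristic would require extra work that the pencil argument avoids.
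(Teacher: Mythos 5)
Your proof is correct, but it reaches the conclusion by a genuinely different route from the paper's. The paper proves Corollary \ref{Corollary: connectedness in the dual version} by explicit multilinear algebra: it computes the minors $\det A(I,J)$ of $A=u(y)$, reduces via permutation matrices to the case where $1,\xi_1,\dots,\xi_k$ are $\mathbb{Q}$-independent, and then uses the order $\prec_{d-k}$ of Definition \ref{Definition: an order of subsets} to select a maximal $J_0$ with $1\notin J_0$ and $\det v_W(J_0)\neq 0$, so that the $e_{I_0}$-coefficient of $Av_W$ (with $I_0=(J_0\setminus\{j_1\})\cup\{1\}$) is a nonvanishing integer combination $c_0+\sum_{i\leq k}c_i\xi_i$; since $1\in I_0$ and $\#I_0=d-k$, one gets $\mu_{I_0}>0$ and Theorem \ref{Theorem: algebraic connectedness} applies. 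You instead exploit the geometry special to $m=1$: $\ker\tilde y$ is a hyperplane, and the hypothesis (via the footnote's reformulation) forces $W\not\subseteq\ker\tilde y$ for every rational $(d-k)$-dimensional $W$, whence $\phi(\tilde yW)=\omega_1=1$ while $\#I(\ker\tilde y\cap W)=d-k-1<d-1$ gives $\psi(\ker\tilde y\cap W)<1$; Lemma \ref{Lemma: highest eigenvalue determined by pencil} then yields the eigenvalue condition at once. I checked the two load-bearing steps and they hold: $\ker\tilde y\cap\langle e_1\rangle=0$ so $\#I(\ker\tilde y\cap W)=\dim(\ker\tilde y\cap W)$, and the weakly constraining condition $b\leq a$ indeed collides with $1=\phi\leq b\leq a\leq\psi<1$. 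Your route is shorter and avoids both the determinant bookkeeping and the ordering trick, at the cost of being strictly confined to $m=1$ (the paper's computation is the template that generalizes to Proposition \ref{Proposition: matrix connectedness}). Two minor polish points: the appeal to Theorem \ref{Theorem: generic behavior of connectedness} for the singleton $\mathcal{M}=\{y\}$ is legitimate but it is cleaner to quote Lemma \ref{Lemma: highest eigenvalue determined by pencil} together with Theorem \ref{Theorem: algebraic connectedness} directly; and the footnote equivalence you invoke deserves its one-line justification (the $\mathbb{Q}$-span of $1,\xi_1,\dots,\xi_{d-1}$ has dimension at least $k+1$ if and only if ${}^t(1,\xi_1,\dots,\xi_{d-1})$ lies in no $k$-dimensional rational subspace, by Steinitz exchange).
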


\begin{remark}
The upshot of Corollaries \ref{Corollary: connectedness in 1 by d} and \ref{Corollary: connectedness in the dual version} is that in contrast to requiring all coordinates of $x$ ($y$, resp.) with $1$ to be linearly independent over $\mathbb{Q}$ as in \cite{Schmidt_Summerer_2009_Parametric_geometry_of_numbers_and_applications_MR2557854}, we only need $k$ many coordinates of $x$ ($d-k$ many coordinates of $y$, resp.) with $1$ to be linearly independent over $\mathbb{Q}$ to ensure that $\boldsymbol{L}^{\omega}(\Lambda_{x},\cdot)$ ($\boldsymbol{L}^{\omega}(\Lambda_{y},\cdot)$, resp.) is connected at $k$. 
\end{remark}

Given general positive integers $m,n$ with $m+n=d$ and an $(m,n)$-weight vector $\omega$. In \cite[Proposition 9.1]{Schmidt_2020_On_parametric_geometry_of_numbers_MR4121876}, Schmidt studied conditions for connectedness of $\boldsymbol{L}^{\omega}(\Lambda_x,\cdot)$ for $x\in M_{m,n}$. We may write $x=(x_{i,j})_{1\leq i\leq m,1\leq j\leq n}$. For any $k\in \{1,\cdots,d-1\}$ and $\mathcal{R}\subset \{1,\cdots,n\}$ of cardinality $d-k$, we denote 
\begin{align*}
    \mathcal{D}(\mathcal{R})=\{\det(x_{i_s,j_t})_{1\leq s,t\leq p}: 1\leq p\leq \min\{m,d-k\}, & 1\leq i_1<\cdots<i_p\leq m,\\ & j_1<\cdots<j_p\in \mathcal{R}  \}.
\end{align*}

\begin{proposition}\label{Proposition: matrix connectedness}\cite[Proposition 9.1]{Schmidt_2020_On_parametric_geometry_of_numbers_MR4121876}

(a) Given $k\in \{m,\cdots,d-1\}$. Suppose that for each $\mathcal{R}$ as above of cardinality $d-k$, the elements of $\mathcal{D}(\mathcal{R})$ with $1$ are linearly independent over $\mathbb{Q}$. Then $\boldsymbol{L}^{\omega}(\Lambda_x,\cdot)$ is connected at $k$.

(b) Given $k\in\{1,\cdots,m\}$. Suppose for $\mathcal{R}=\{1,\cdots,n\}$, the elements of $\mathcal{D}(\mathcal{R})$ with $1$ are linearly independent over $\mathbb{Q}$. Then $\boldsymbol{L}^{\omega}(\Lambda_x,\cdot)$ is connected at $k$.
\end{proposition}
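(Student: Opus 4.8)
The plan is to deduce the proposition from Theorem~\ref{Theorem: algebraic connectedness} (equivalently, by Proposition~\ref{Proposition: Two theorems are equivalent}, from Theorem~\ref{Theorem: geometric connectedness}). Every $k$-dimensional subspace $W$ spanned by vectors of $\Lambda_x$ has the form $W=u(x)W_0$ for a rational $k$-dimensional $W_0\subseteq\mathbb{R}^d$; fixing a primitive integral vector $v_0\in\bigwedge^k\mathbb{Z}^d$ representing $W_0$, the vector $v_W=u(x)^{\wedge k}v_0$ represents $W$. Since $a^\omega_1$ acts on the basis $\{e_I\}_{|I|=k}$ of $\bigwedge^k\mathbb{R}^d$ by $a^\omega_1 e_I=e^{\mu_I}e_I$, the largest eigenvalue in the decomposition of $v_W$ is $e^{\mu^\ast}$ with $\mu^\ast=\max\{\mu_I:(v_W)_I\neq 0\}$. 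Hence it suffices to exhibit, for each such $W_0$, an index set $I$ with $|I|=k$, $\mu_I>0$ and $(v_W)_I\neq 0$; equivalently, with $\mu_I>0$ and $\pi_I(W)=\mathbb{R}^I$.

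The computational heart is an expansion of $(v_W)_I=\sum_J\det\big(u(x)_{I,J}\big)(v_0)_J$ obtained from the block shape $u(x)=\left(\begin{smallmatrix}\mathbb{I}_m&x\\0&\mathbb{I}_n\end{smallmatrix}\right)$. Writing $I=I_1\sqcup I_2$, $J=J_1\sqcup J_2$ with the index $1$ (resp.\ $2$) denoting the part in $\{1,\dots,m\}$ (resp.\ $\{m+1,\dots,d\}$), a repeated Laplace expansion --- along the zero block, then along the two identity blocks of $u(x)$ --- shows $\det(u(x)_{I,J})=0$ unless $I_2\subseteq J_2$ and $J_1\subseteq I_1$, and in that case $\det(u(x)_{I,J})=\pm\det\big(x_{\,I_1\setminus J_1,\,(J_2\setminus I_2)-m}\big)$, a minor of $x$ with rows in $I\cap\{1,\dots,m\}$ and columns in $\mathcal{R}_I:=\{1,\dots,n\}\setminus\{i-m:i\in I_2\}$ (hence at most $m$ rows and at most $|\mathcal{R}_I|$ columns). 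Because the pair $(I_1\setminus J_1,\,J_2\setminus I_2)$ determines $J$, the coordinate $(v_W)_I$ is a $\mathbb{Z}$-linear combination of $1$ (from $J=I$, coefficient $(v_0)_I$) and of minors of $x$ drawn from $\mathcal{D}(\mathcal{R}_I)$, with prescribed coefficients $\pm(v_0)_J$; under the hypothesis neither $1$ nor $0$ is such a minor.

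For part (a), where $k\ge m$, take $I=\{1,\dots,m\}\sqcup I_2$ with $|I_2|=k-m$. Then $|\mathcal{R}_I|=d-k$, every minor occurring lies in $\mathcal{D}(\mathcal{R}_I)$, and $\mu_I=1-\sum_{i\in I_2}\omega_i>0$ since $|I_2|<n$. If $(v_W)_I=0$ for all such $I$, the $\mathbb{Q}$-linear independence of $\mathcal{D}(\mathcal{R}_I)\cup\{1\}$ forces all coefficients to vanish; but since $v_0\neq 0$ one can pick a support index $J^\sharp$ of $v_0$ with $|J^\sharp\cap\{1,\dots,m\}|$ maximal, choose $I_2\subseteq J^\sharp\cap\{m+1,\dots,d\}$, and thereby isolate a single minor whose coefficient is $\pm(v_0)_{J^\sharp}\neq 0$ --- a contradiction. (For $k=m$ this is immediate, with $I=\{1,\dots,m\}$, since some $m\times m$ minor built from the first $m$ coordinates of a reduced echelon basis of $W_0$ is nonzero.)

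For part (b), where $k\le m$, if $W_0\cap\ker\tilde x=0$ then $\pi_{\{1,\dots,m\}}$ is injective on $W$, so $\pi_I(W)=\mathbb{R}^I$ for some $I\subseteq\{1,\dots,m\}$ of size $k$, where $\mu_I>0$ automatically --- no arithmetic is needed. Otherwise one takes $I=I_1\sqcup I_2$ with $I_2\subseteq\{m+1,\dots,d\}$, $|I_2|=\dim(W_0\cap\ker\tilde x)$, chosen so that $W\cap\langle e_{m+1},\dots,e_d\rangle=u(x)(W_0\cap\ker\tilde x)$ projects onto $\mathbb{R}^{I_2}$, and $I_1\subseteq\{1,\dots,m\}$, $|I_1|=k-|I_2|$, chosen so that $\tilde x W_0=\pi_{\{1,\dots,m\}}(W)$ projects onto $\mathbb{R}^{I_1}$; this gives $\pi_I(W)=\mathbb{R}^I$, $\mathcal{R}_I\subseteq\{1,\dots,n\}$, and every minor appearing has at most $m$ rows and at most $n\le d-k$ columns, so lies in $\mathcal{D}(\{1,\dots,n\})$. \textbf{The main obstacle is securing $\mu_I>0$ in this last case}: the required weight inequality $\sum_{i\in I_1}\omega_i>\sum_{i\in I_2}\omega_i$ (over admissible $I_1,I_2$) follows once $\phi(\tilde x W_0)>\psi(\ker\tilde x\cap W_0)$, and proving this is exactly where the Diophantine hypothesis is used: it implies (by a $\mathbb{Q}$-linear-independence argument of the type above, applied to the polynomials cutting out the pencils, cf.\ Definition~\ref{Definition: pencils} and Proposition~\ref{Proposition: pencils are Zariski closed}) that $\{x\}$ lies in no proper rational weakly constraining $k$-pencil $\mathcal{P}^\omega_{W_0,a,b}$, and for $W_0$ with $W_0\cap\ker\tilde x\neq 0$ this forces precisely the inequality $\phi(\tilde x W_0)>\psi(\ker\tilde x\cap W_0)$. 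Carrying out this reduction, together with the bookkeeping in part (a) that keeps a coefficient nonzero, is where the real work lies.
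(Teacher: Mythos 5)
Your part (a) is essentially the paper's argument: expand $(u(x)v_W)_I=\sum_J\det A(I,J)\det v_W(J)$, observe that for $I\supseteq\{1,\dots,m\}$ the nonzero $\det A(I,J)$ are $1$ (for $J=I$) or minors of $x$ with rows in $\{1,\dots,m\}$ and columns in $\mathcal{R}_I$, note that $(I\setminus J,J\setminus I)$ determines $J$, pick a support index of $v_W$ to guarantee a nonzero integer coefficient on some element of $\mathcal{D}(\mathcal{R}_I)\cup\{1\}$, and invoke the linear independence hypothesis. The choice $I=\{1,\dots,m\}\sqcup I_2$ gives $\mu_I>0$ automatically, exactly as in the paper. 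This half is correct.

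Part (b) has a genuine gap. In the sub-case $W_0\cap\ker\tilde x\neq 0$ you choose $I$ with $I_2\subseteq\{m+1,\dots,d\}$ nonempty, and you then need $\mu_I>0$, which you reduce to $\phi(\tilde xW_0)>\psi(\ker\tilde x\cap W_0)$, i.e.\ to the assertion that $x$ lies in no proper rational weakly constraining $k$-pencil. But that assertion is (via Lemma \ref{Lemma: highest eigenvalue determined by pencil}) precisely the hypothesis of Theorem \ref{Theorem: algebraic connectedness} that the whole proof is supposed to verify; you offer no argument for it beyond ``a $\mathbb{Q}$-linear-independence argument of the type above,'' so the step is circular as written. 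The clean resolution, which is what the paper does, is to \emph{always} take $I_0\subseteq\{1,\dots,m\}$ in part (b) (possible since $k\le m$), so that $\mu_{I_0}=\sum_{i\in I_0}\omega_i>0$ needs no arithmetic input: pick any $J_0$ in the support of $v_{W_0}$, choose $I_0\supseteq J_0\cap\{1,\dots,m\}$ of size $k$ inside $\{1,\dots,m\}$, and run the same expansion; the term for $J_0$ contributes $\pm\det v_{W_0}(J_0)$ times either $1$ or a genuine minor of size $|J_0\setminus I_0|\le\min\{m,d-k\}$ with columns in $\{1,\dots,n\}$, and linear independence of $\mathcal{D}(\{1,\dots,n\})\cup\{1\}$ forces the $e_{I_0}$-coefficient to be nonzero. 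In particular your ``otherwise'' sub-case cannot actually occur: a nonzero $e_{I_0}$-coefficient with $I_0\subseteq\{1,\dots,m\}$ means $\pi_{I_0}(u(x)W_0)=\mathbb{R}^{I_0}$, which forces $\ker\tilde x\cap W_0=0$. So the hypothesis rules out the very configuration you are struggling to handle, and the proof of that fact is the minor expansion you already set up in part (a), not a separate pencil argument.
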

We will give an alternative proof of this proposition using multilinear algebra.

\section{Preliminaries}\label{Section: Preliminaries}
Let $d\geq 2$ be an integer. For $i\in \{1,\cdots,d\}$, denote by ${e}_i$ the standard basis vector of $\mathbb{R}^d$, i.e., ${e}_i$ is a column vector with $1$ in the $i$-th row and $0$ elsewhere. Here and in the sequel, we fix $k\in\{1,\cdots,d-1\}$ and a weight vector $\omega$.

\subsection{Multilinear algebra}
For a subset $I=\{i_1,\cdots,i_k\}\subset \{1,\cdots,d\}$ with cardinality $\# I=k$, we often write $I=\{i_1<\cdots<i_k\}$ to emphasize the order of elements in $I$. We denote by $\pi_I:\mathbb{R}^d\to \mathbb{R}^I$ the orthogonal projection, where $\mathbb{R}^I$ is the $k$-dimensional linear subspace spanned by $\{e_i:i\in I\}$. Denote by $\bigwedge^k \mathbb{R}^d$ the $k$th exterior algebra of $\mathbb{R}^d$. We equip $\bigwedge^k \mathbb{R}^d$ with basis $\{e_I: \#I =k\}$, where $e_I=e_{i_1}\wedge\cdots\wedge e_{i_k}$ when $I=\{i_1<\cdots<i_k\}$. Note that any $g\in \rm{SL}_d(\mathbb{R})$ naturally acts on $\bigwedge^k \mathbb{R}^d$ by 
 \[g( v_1\wedge\cdots\wedge v_k)=gv_1\wedge\cdots\wedge gv_k.\]
Since $a^{\omega}_1$ is diagonal, any vector in $\bigwedge^k \mathbb{R}^d$ can be decomposed into the sum of eigenvectors of $a^{\omega}_1$. By definition of $a^{\omega}_1$, it turns out that eigenvectors of $a^{\omega}_1$ are exactly $e_I$'s, where $\# I=k$. Moreover, $a^{\omega}_1 e_I=\exp(\mu_I) e_I$ for $\mu_I=\sum_{i\in I,i\leq m} \omega_i-\sum_{i\in I, i\geq m+1}\omega_i$.

\begin{lemma}\label{Lemma: projection is onto iff component is nonzero}
    Let $k\in \{1,\cdots,d-1\}$ and $W\subset \mathbb{R}^d$ be a $k$-dimensional linear subspace spanned by vectors $\{w_1,\cdots,w_k\}$. Let $v_W=w_1\wedge\cdots \wedge w_k\in \bigwedge^k \mathbb{R}^d$ represent $W$. Then $\pi_I(W)=\mathbb{R}^I$ if and only if the $e_I$-component in $v_W$ is nonzero.
\end{lemma}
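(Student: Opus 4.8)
The plan is to identify the $e_I$-component of $v_W$ with a specific $k\times k$ minor of the matrix of the spanning vectors, and then recognize that same minor as the "determinant" of the projection $\pi_I$ restricted to $W$, in suitable coordinates.

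First I would form the $d\times k$ matrix $A=(w_1\,|\,\cdots\,|\,w_k)$ whose columns are the chosen spanning vectors, and write $w_j=\sum_{i=1}^d A_{ij}e_i$. Expanding $v_W=w_1\wedge\cdots\wedge w_k$ by multilinearity, and using that a wedge with a repeated basis vector vanishes together with the antisymmetry of $\wedge$ to reorder, one obtains the standard expansion
\[
v_W=\sum_{\#I=k}\det(A_I)\,e_I,
\]
where for $I=\{i_1<\cdots<i_k\}$ the matrix $A_I=(A_{i_s,j})_{1\le s,j\le k}$ is the $k\times k$ submatrix of $A$ formed by the rows indexed by $I$. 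Hence the $e_I$-component of $v_W$ equals $\det(A_I)$. (This also makes transparent that the condition "$e_I$-component nonzero" does not depend on the choice of spanning set: changing the basis of $W$ multiplies $A$ on the right by an element of $\mathrm{GL}_k(\mathbb{R})$, hence scales every $\det(A_I)$ by the same nonzero factor.)

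Next I would unwind the left-hand side. Identify $\mathbb{R}^I$ with $\mathbb{R}^k$ via $e_{i_s}\mapsto e_s$; under this identification, the orthogonal projection $\pi_I$ restricted to $W$ sends $Ac\mapsto A_Ic$ for $c\in\mathbb{R}^k$, since $W=\{Ac:c\in\mathbb{R}^k\}$ and $\pi_I$ merely extracts the rows indexed by $I$. Therefore $\pi_I(W)$ is the column space of the square matrix $A_I$, and $\pi_I(W)=\mathbb{R}^I$ if and only if $A_I$ is invertible, i.e. $\det(A_I)\neq 0$. Combining this with the expansion above, $\pi_I(W)=\mathbb{R}^I$ if and only if the $e_I$-component $\det(A_I)$ of $v_W$ is nonzero, which is the assertion.

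The argument is essentially bookkeeping, so I do not expect a genuine obstacle. The only points requiring a little care are the combinatorial derivation of $v_W=\sum_{\#I=k}\det(A_I)e_I$ with the correct signs (grouping the multilinear expansion by the image of an injection $\{1,\dots,k\}\to\{1,\dots,d\}$ and recognizing the resulting alternating sum as a determinant), and matching the orderings in the identification $\mathbb{R}^I\cong\mathbb{R}^k$ so that $\pi_I|_W$ is genuinely represented by $A_I$ rather than a row-permuted version of it.
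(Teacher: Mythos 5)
Your proof is correct and follows essentially the same route as the paper: the paper expands $v_W=\bigwedge_i(\pi_I(w_i)+\pi_{I^c}(w_i))$ and identifies the $e_I$-coefficient with $\pi_I(w_1)\wedge\cdots\wedge\pi_I(w_k)$, which is exactly your minor $\det(A_I)$, and then both arguments conclude by noting that surjectivity of $\pi_I|_W$ between $k$-dimensional spaces is equivalent to the nonvanishing of this quantity. No gaps.
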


\begin{proof}
For each vector $w_i$, we may write
\[w_i=\pi_I(w_i)+\pi_{I^c}(w_i),\]
where $I^c=\{1,\cdots,d\}\setminus I$. Therefore,
\begin{align*}
    v_W&=(\pi_I(w_1)+\pi_{I^c}(w_1))\wedge\cdots\wedge(\pi_I(w_k)+\pi_{I^c}(w_k))\\
    &=c_I e_I + \sum_{J\neq I, \# J=k} c_J e_J,
\end{align*}
where $\pi_I(w_1)\wedge\cdots \wedge\pi_I(w_k)=c_I e_I$. Assume that $\pi_I(W)=\mathbb{R}^I$. 
 Then since $\dim W=k$ and $(w_1,\cdots,w_k)$ is a basis of $W$, $(\pi_I(w_1),\cdots,\pi_I(w_k))$ is a basis of $\mathbb{R}^I$. Hence, $c_I\neq 0$.

Conversely, assume that $c_I\neq 0$. Then $\pi_I(w_1)\wedge\cdots \wedge\pi_I(w_k)\neq 0$. Therefore, $(\pi_I(w_1),\cdots,\pi_I(w_k))$ form a basis of $\mathbb{R}^I$. That is, $\pi_I(W)=\mathbb{R}^I$.
\end{proof}

\begin{proof}[Proof of equivalence between Theorem \ref{Theorem: geometric connectedness} and Theorem \ref{Theorem: algebraic connectedness}]
     Suppose for a $k$-dimensional linear space $W$ spanned by vectors of $\Lambda$, there is some $I$ of cardinality $k$ such that $\mu_I>0$ and $\pi_I(W)=\mathbb{R}^I$. Then by Lemma \ref{Lemma: projection is onto iff component is nonzero} the coefficient of $e_I$ in $v_W$ is nonzero. As $a^{\omega}_1 e_I= \exp({\mu_I}) e_I$, $W$ satisfies the assumption of Theorem \ref{Theorem: algebraic connectedness}.
     
     Conversely, if $W$ satisfies the assumption of Theorem \ref{Theorem: algebraic connectedness}, then there exists $I$ of cardinality $k$ with $\mu_I>0$ such that $e_I$-component of $v_W$ is nonzero. Again by Lemma \ref{Lemma: projection is onto iff component is nonzero}, $\pi_I(W)=\mathbb{R}^I$.
\end{proof}

The following is an observation made in the proof of \cite[Theorem 5.17]{Aka_Breuillard_Rosenzweig_Saxce_2018_Diophantine+approximation_MR3777412}. For completeness, we include a proof as follows.
\begin{lemma}\label{Lemma: highest eigenvalue determined by pencil}
    Given an $(m,n)$-weight vector $\omega$, $x\in M_{m, n}$ and a $k$-dimensional linear subspace $W\subset \mathbb{R}^d$. Let $v_W$ represent $W$ in $\bigwedge^k \mathbb{R}^d$. Then the largest eigenvalue in the decomposition of $u(x)v_W$ into eigenvectors of $a^{\omega}_1$ in $\bigwedge^k \mathbb{R}^d$ is $\exp(\phi(\Tilde{x}W)-\psi(\ker \Tilde{x}\cap W))$, where $\phi,\psi$ are defined as in (\ref{align: definition of psi and phi}).
\end{lemma}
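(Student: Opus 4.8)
The plan is to recast the statement as a maximum-weight basis problem for the linear matroid attached to $W$ and then run the greedy algorithm by hand. Put $U:=u(x)W$, so that $u(x)v_W=u(x)w_1\wedge\cdots\wedge u(x)w_k$ represents $U$ in $\bigwedge^k\mathbb{R}^d$. The $e_I$ with $\#I=k$ are exactly the eigenvectors of $a^{\omega}_1$ on $\bigwedge^k\mathbb{R}^d$, with $a^{\omega}_1 e_I=\exp(\mu_I)e_I$, and by Lemma \ref{Lemma: projection is onto iff component is nonzero} the $e_I$-coordinate of $u(x)v_W$ is nonzero precisely when $\pi_I(U)=\mathbb{R}^I$. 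So the number to be identified is
\[
\max\bigl\{\mu_I:\ \#I=k,\ \pi_I(U)=\mathbb{R}^I\bigr\}.
\]
Now the $I$ with $\pi_I(U)=\mathbb{R}^I$ are exactly the bases of the column matroid of a $k\times d$ matrix whose rows form a basis of $U$; giving index $i$ the weight $\omega_i$ for $i\le m$ and $-\omega_i$ for $i>m$, the displayed maximum is the maximum weight of a basis. Because $\omega_1\ge\cdots\ge\omega_m>0>-\omega_d\ge\cdots\ge-\omega_{m+1}$, the greedy algorithm processes the ground set in the order $1,2,\dots,m,d,d-1,\dots,m+1$, and I claim it returns $I_0:=J(\tilde{x}W)\sqcup I(\ker\tilde{x}\cap W)$. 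Granting this, $\#I_0=\dim\tilde{x}W+\dim(\ker\tilde{x}\cap W)=k$ by rank--nullity (so $I_0$ is indeed a basis), and $\mu_{I_0}=\sum_{j\in J(\tilde{x}W)}\omega_j-\sum_{i\in I(\ker\tilde{x}\cap W)}\omega_i=\phi(\tilde{x}W)-\psi(\ker\tilde{x}\cap W)$; since greedy returns a maximum-weight basis, this is the asserted value.

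To compute the greedy output I use two elementary identities. First, $\pi_{\{1,\dots,i\}}(U)=\pi_{\{1,\dots,i\}}(\tilde{x}W)$ for every $i\le m$, because $u(x)$ carries $w$ to a vector with first $m$ coordinates $\tilde{x}w$ and with its last $n$ coordinates unchanged. Second, $U\cap\langle e_{m+1},\dots,e_d\rangle=u(x)(\ker\tilde{x}\cap W)$, and for $i\ge m+1$ one has $\dim\bigl(U\cap\langle e_{m+1},\dots,e_i\rangle\bigr)=\dim\bigl((\ker\tilde{x}\cap W)\cap\langle e_1,\dots,e_i\rangle\bigr)$, because $\pi_{\{m+1,\dots,d\}}$ is injective on $\ker\tilde{x}$ and identifies these two flags. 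In the first phase greedy adds $i\in\{1,\dots,m\}$ exactly when the $i$-th column is independent of the earlier chosen ones, i.e. when $\dim\pi_{\{1,\dots,i\}}(U)>\dim\pi_{\{1,\dots,i-1\}}(U)$; combining the first identity with the dual description $\dim\bigl(V\cap\langle e_j,\dots,e_m\rangle\bigr)=\dim V-\dim\pi_{\{1,\dots,j-1\}}(V)$ for $V\subseteq\langle e_1,\dots,e_m\rangle$, this is precisely the condition $i\in J(\tilde{x}W)$. After the first phase the chosen columns span the same subspace as columns $1,\dots,m$ together, so in the second phase greedy adds $i\in\{m+1,\dots,d\}$ exactly when $\dim\pi_{\{1,\dots,m\}\cup\{i,\dots,d\}}(U)>\dim\pi_{\{1,\dots,m\}\cup\{i+1,\dots,d\}}(U)$; rewriting through $\dim\pi_S(U)=k-\dim\bigl(U\cap\langle e_j:j\notin S\rangle\bigr)$ turns this into $\dim(U\cap\langle e_{m+1},\dots,e_i\rangle)>\dim(U\cap\langle e_{m+1},\dots,e_{i-1}\rangle)$, which by the second identity is precisely $i\in I(\ker\tilde{x}\cap W)$. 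Hence greedy returns $I_0$.

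The main obstacle is the second phase: one must check that the columns chosen in the first phase already span everything that columns $1,\dots,m$ span (so the increments are governed by the flag $U\cap\langle e_{m+1},\dots,e_i\rangle$ and not a larger subspace), and one must transport the ``intersection with $\langle e_1,\dots,e_i\rangle$'' from the definition of $I(\cdot)$ through the shear $u(x)$ --- exactly the role of the identity $U\cap\langle e_{m+1},\dots,e_d\rangle=u(x)(\ker\tilde{x}\cap W)$ together with the injectivity of $\pi_{\{m+1,\dots,d\}}$ on $\ker\tilde{x}$. One may also avoid matroid language: pick a basis of $W$ adapted simultaneously to $\tilde{x}$ and to the standard flags so that the $a^{\omega}_1$-leading term of $u(x)v_W$ is visibly $e_{I_0}$, and prove optimality directly from the majorization $\#(S\cap\{1,\dots,l\})\le\dim\pi_{\{1,\dots,l\}}(\tilde{x}W)$, valid for every $S$ with $\pi_S(\tilde{x}W)=\mathbb{R}^S$ (together with its analogue for the last $n$ coordinates); the greedy formulation merely bundles existence and optimality into one step.
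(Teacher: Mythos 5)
Your proof is correct, and it takes a genuinely different route from the paper's. The paper argues directly: it picks a basis $(w_i)_{i\in K}$ of $W$ adapted to the two flags, with $K=I(\ker\tilde{x}\cap W)\cup J(\tilde{x}W)$, normalizes so that $\bigwedge_{i\in K}u(x)w_i=e_K+\sum_{J\neq K}c_Je_J$, and then compares $\mu_J$ with $\mu_K$ term by term using the fact that for each $i\in K$ the vector $u(x)w_i$ has $e_i$ as its fastest-expanding (resp.\ slowest-contracting) component. You instead observe that the $I$ with nonzero $e_I$-coefficient are exactly the bases of the column matroid of $U=u(x)W$ and that $\mu$ is a linear weight function on the ground set, so the classical matroid greedy theorem (for maximum-weight \emph{bases}, valid for weights of arbitrary sign --- worth stating explicitly, since the more commonly quoted version for independent sets assumes nonnegative weights) reduces everything to identifying the greedy output; your rank--nullity translations correctly show that the two greedy phases select $J(\tilde{x}W)$ and $I(\ker\tilde{x}\cap W)$ respectively. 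The trade-off: the paper's argument is self-contained and shorter, but its optimality step (``if $c_J\neq 0$ then $\mu_J\leq\mu_K$'') is asserted rather tersely; your version outsources exactly that step to a standard theorem and makes the Schubert-cell/flag structure behind the definitions of $I(\cdot)$ and $J(\cdot)$ transparent, at the cost of importing matroid language and of having to verify carefully (as you do) that after each phase the chosen columns span the same space as the full initial segment processed so far.
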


\begin{proof}
Let $s,r\geq 0$ be two integers such that $s+r=k$, $\dim \Tilde{x}W=r$ and $\dim \ker\Tilde{x}\cap W =s$. By definition, for any $i\in I(\ker \Tilde{x}\cap W)$, there exists $w_i\in \ker \Tilde{x}\cap W$ such that
    \[w_i=e_i+\sum_{j<i} c_{ij}e_j.\]
Likewise, for any $i\in J(\Tilde{x}W)$, there exists $w_i\in W$ such that
    \[\Tilde{x}w_i= e_i+\sum_{i<j\leq m} c_{ij} e_j.\]
Denoting $K= I(\ker \tilde{x}\cap W)\cup J(\Tilde{x}W)$, we may choose $(w_i)_{i\in K}$ to form a basis of $W$. We observe that 
\begin{align*}
    &u(x)w_i=e_i + \sum_{m+1\leq j<i} c_{ij}e_j, \quad \forall i\in I(\ker\Tilde{x}\cap W);\\
    &u(x)w_i=e_i+\sum_{j>i} c_{ij}e_j, \quad \forall i \in J(\tilde{x}W),
\end{align*}
where the second equality follows as we write $\sum_{j\geq m+1}c_{ij}e_j=\pi_{\{m+1,\cdots,d\}}w_i$ for $i\in J(\Tilde{x}W)$. Hence,
\begin{align*}
    \bigwedge_{i\in  K} u(x)w_i= e_K+\sum_{J\neq K, \# J=k} c_{J} e_{J}.
\end{align*}
Recall that $\omega_1\geq \cdots \geq \omega_m>0$ and $-\omega_{m+1}\leq \cdots \leq -\omega_d<0$. By definition, for any $i\in I(\ker \Tilde{x}\cap W)$ ($i\in J(\Tilde{x}W)$, resp.), $e_i$ is the slowest (largest, resp.) contracting (expanding, resp.) component of $u(x)w_i$ under the action of $a^{\omega}_1$. Hence, if $J\neq K$, $\#J=k$ and $c_J\neq 0$, then $\mu_J\leq \mu_K$. Therefore, the eigenvalue of $e_K$ with respect to $a^{\omega}_1$, which is $\exp(\mu_K)=\exp(\phi(\Tilde{x}W)-\psi(\ker \Tilde{x}\cap W))$, is the largest one. This finishes the proof.
\end{proof}

Given $A\in M_{d,d}$, we may write $A=(A_{ij})_{1\leq i,j\leq d}=(A_1,\cdots,A_d)$, where $A_i$ is a $d\times 1$ column vector for each $i$. Denote by
\[A_J=A_{j_1}\wedge\cdots\wedge A_{j_k}\in \bigwedge^k \mathbb{R}^d,\]
where $J=\{j_1<\cdots<j_k\}\subset \{1,\cdots,d-1\}$. Let $W=\langle w_1,\cdots,w_k \rangle$ be a $k$-dimensional linear subspace of $\mathbb{R}^d$ and $v_W=w_1\wedge\cdots\wedge w_k$. For convenience, we write 
\[w_i=\begin{pmatrix}
    w_{1i}\\
    \vdots\\
    w_{di}
\end{pmatrix}\in \mathbb{R}^d\]
for each $1\leq i \leq k$. For any $I,J\subset \{1,\cdots,d\}$ with $\# I=\# J=k$, let
\begin{align*}
    A(I,J)=(A_{ij})_{i\in I, j\in J}\in M_{k,k};\\
    v_W(J)=(w_{ji})_{j\in J,1\leq i\leq k}\in M_{k,k}.
\end{align*}
We observe the following simple lemma.

\begin{lemma}\label{Lemma: coefficient of e_I}
    In $\bigwedge^k \mathbb{R}^d$, the coefficient of $e_I$ in $Av_W$ is 
    \[\sum_{\#J=k} \det A(I,J) \det v_W(J),\]
where the summation is taken over all subsets $J$ of $\{1,\cdots,d\}$ of cardinality $k$.
\end{lemma}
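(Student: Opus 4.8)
The plan is to reduce the identity to the Cauchy--Binet formula. First I would expand $Av_W = Aw_1 \wedge \cdots \wedge Aw_k$ by writing each $Aw_i$ in the standard basis: $Aw_i = \sum_{j=1}^d (Aw_i)_j\, e_j$ with $(Aw_i)_j = \sum_{l=1}^d A_{jl} w_{li}$. Multiplying out the wedge product and using antisymmetry of $\wedge$, only strictly increasing index tuples survive, so
\[
Av_W = \sum_{\# J = k} \det\big( (Aw_i)_j \big)_{j \in J,\, 1 \le i \le k}\, e_J,
\]
and hence the coefficient of $e_I$ in $Av_W$ is the single $k \times k$ determinant $\det\big( (Aw_i)_j \big)_{j \in I,\, 1 \le i \le k}$.

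Next I would recognize this $k\times k$ matrix as a product. Let $A(I,\cdot)\in M_{k,d}$ be the submatrix of $A$ consisting of the rows indexed by $I$, and let $B\in M_{d,k}$ be the matrix whose columns are $w_1,\dots,w_k$, so that the $(j,i)$ entry of $B$ is $w_{ji}$. Then the $(j,i)$ entry of the product $A(I,\cdot)\,B$ is exactly $\sum_{l=1}^d A_{jl}w_{li} = (Aw_i)_j$ for $j\in I$, so the coefficient of $e_I$ equals $\det\big(A(I,\cdot)\,B\big)$.

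Finally I would apply the Cauchy--Binet formula to the product of the $k\times d$ matrix $A(I,\cdot)$ with the $d\times k$ matrix $B$:
\[
\det\big(A(I,\cdot)\,B\big) = \sum_{\# J = k} \det\big(A(I,\cdot)[\cdot, J]\big)\,\det\big(B[J, \cdot]\big),
\]
where $A(I,\cdot)[\cdot,J]=A(I,J)$ in the notation of the excerpt, and $B[J,\cdot]=(w_{ji})_{j\in J,\,1\le i\le k}=v_W(J)$. This is precisely the asserted expression $\sum_{\#J=k}\det A(I,J)\det v_W(J)$.

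The proof is essentially routine; the only point needing attention is the bookkeeping of orderings and signs. One must check that when $e_J$ is produced in the wedge-product expansion with $J=\{j_1<\cdots<j_k\}$ written increasingly, the resulting row-ordering of $A(I,J)$ and of $v_W(J)$ matches the one fixed in the definitions of those submatrices (both use increasing indices), so that no stray signs appear and Cauchy--Binet applies in its standard unsigned form. This index-matching is the main, and quite minor, obstacle.
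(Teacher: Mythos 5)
Your proof is correct. It is essentially the same computation as the paper's, reorganized: the paper first expands $Av_W=\bigl(\sum_j w_{j1}A_j\bigr)\wedge\cdots\wedge\bigl(\sum_j w_{jk}A_j\bigr)$ in terms of wedges of \emph{columns} of $A$, reads off the coefficient $\det v_W(J)$ of $A_J$, and then expands each $A_J$ in the $e_I$ basis to pick up the factor $\det A(I,J)$ --- in effect giving an inline, self-contained proof of Cauchy--Binet inside $\bigwedge^k\mathbb{R}^d$. You instead expand directly into the $e_J$ basis, identify the $e_I$-coefficient as the single determinant $\det\bigl(A(I,\cdot)B\bigr)$ of a $k\times k$ matrix product, and then invoke Cauchy--Binet as a known theorem. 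The two routes differ only in whether Cauchy--Binet is cited or reproved; your version is slightly shorter at the cost of an external reference, and your remark about matching the increasing orderings of $I$ and $J$ (so that the unsigned form of Cauchy--Binet applies) correctly disposes of the only point where a sign could go astray.
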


\begin{proof}
    For $1\leq i \leq k$, we have 
    \[A w_i=A (\sum_{j=1}^d w_{ji} e_j)=\sum_{j=1}^d w_{ji} A_j.\]
So
\begin{align*}
    A v_W&=Aw_1\wedge \cdots \wedge A w_k
    =(\sum_{j=1}^d w_{j1} A_j)\wedge \cdots \wedge (\sum_{j=1}^d w_{jk} A_j).
\end{align*}
We observe that for any $J\subset \{1,\cdots, d\}$ with $\# J=k$, the component of $A_J$ in $A v_W$ is $\det v_W(J) A_J$, and the component of $e_I$ in $A_J$ is $\det A(I,J) e_I$. Therefore, the coefficient of $e_I$ in $Av_W$ is $\sum_{\# J=k} \det A(I,J) \det v_W(J)$.

\end{proof}

A lexicographic type order in the family of all subsets of cardinality $k$ of $\{1,\cdots,d\}$ will play a role in obtaining Corollaries \ref{Corollary: connectedness in 1 by d} and \ref{Corollary: connectedness in the dual version}:

\begin{definition}\label{Definition: an order of subsets}
We define an order $\prec_k$ in the family of all subsets of cardinality $k$ of $\{1,\cdots, d\}$ as follows: for two subsets $J=\{j_1<\cdots<j_k\}$ and $J'=\{j'_1<\cdots<j'_k\}$, $J'\prec_k J$ if 
\begin{align*}
  \text{there exists } r\in \{1,\cdots, k\} \text{ such that } j'_{r}<j_{r} \text{ and } j'_s= j_s \text{ for all } s\geq r+1, 
\end{align*}
where we set $j'_{k+1}=j_{k+1}=d+1$. Hence, $r=k$ means $j'_k<j_k$. It is straightforward to verify that $\prec_k$ defines a linear order. 

\end{definition}

\subsection{Successive minima}
For any nonzero vector $v\in \mathbb{R}^d$ and $t\geq 0$, we denote by $\lambda(a^{\omega}_t v)$ the smallest $\lambda> 0$ such that $a^{\omega}_t v\in \lambda \mathcal{C}$. Denote by 
\[L^{\omega}(v,t)=\log \lambda(a^{\omega}_t v).\]
Given a unimodular lattice $\Lambda$ and $t\geq 0$, there exist nonzero $v_1,\cdots,v_d\in \Lambda$ (depending on $t$) such that 
\[L^{\omega}_i(\Lambda,t)=L^{\omega}(v_i,t), \quad \forall 1\leq i\leq d.\]
We say such $v_1,\cdots,v_d\in \Lambda$ realize the successive minima of $a^{\omega}_t \Lambda$. In this case it is necessary that $v_i$ is primitive in $\Lambda$ for each $1\leq i\leq d$.

Note that $\Lambda^k:=\bigwedge^k \Lambda$ can be viewed as a unimodular lattice in $\bigwedge^k \mathbb{R}^d$. Consider the closed convex symmetric compound body $\mathcal{C}^k\subset \bigwedge^k \mathbb{R}^d$ given by 
\begin{align*}
    \mathcal{C}^k=\{\sum_{I} c_I e_I : |c_I|\leq 1, \forall I \text{ with } \# I=k\}.
\end{align*}
Let $N=\dim \bigwedge^k \mathbb{R}^d$. As in the case of $k=1$, for any $1\leq k\leq d-1$ we may also consider the successive minima $\lambda_i(a^{\omega}_t \Lambda^k), 1\leq i\leq N$ with respect to $\mathcal{C}^k$, and the corresponding function $\boldsymbol{L}^{\omega}(\Lambda^k,t)$ from $\mathbb{R}_{\geq 0}$ to $\mathbb{R}^N$. Given $t\geq 0$, let $v_1,\cdots,v_d\in \Lambda$ realize the successive minima of $a^{\omega}_t \Lambda$. Then Mahler's compound body theorem \cite[Theorem 3]{Mahler_2019_On+compound+convex+bodies+_MR4605022} asserts that 
\begin{align}\label{align: Mahler's theorem}
    L^{\omega}_1(\Lambda^k,t)\asymp_{+,d} L^{\omega}(v_1\wedge\cdots\wedge v_k,t),
\end{align}
where for two real numbers $A$ and $B$, $A\asymp_{+,d} B$ means there exists a constant $C$ depending only on $d$ such that $|A-B|\leq C$.
For any $t\geq 0$, if $L^{\omega}_k(\Lambda,t)<L^{\omega}_{k+1}(\Lambda,t)$, we denote by $W_k(t)$ the $k$-dimensional linear space spanned by $k$ linearly independent lattice points in $\lambda_k(a^{\omega}_t \Lambda)\mathcal{C}\cap a^{\omega}_t \Lambda$. We need the following observation: 

\begin{lemma}\label{Lemma: consequence of disconnectedness}\cite[Lemma 2.1]{Schmidt_Summerer_2009_Parametric_geometry_of_numbers_and_applications_MR2557854}
Let $k\in \{1,\cdots,d-1\}$. Suppose there exists $t_0\geq 0$ such that $L^{\omega}_k(\Lambda,t)<L^{\omega}_{k+1}(\Lambda,t)$ for all $t\geq t_0$. Let $W$ be the $k$-dimensional space spanned by $k$ linearly independent vectors $w_1,\cdots,w_k$ of $\Lambda$ realizing $L^{\omega}_1(\Lambda,t_0),\cdots,L^{\omega}_k(\Lambda,t_0)$. Then $W_k(t)=a^{\omega}_t W$ for all $t\geq t_0$.
\end{lemma}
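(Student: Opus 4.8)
The plan is to prove the slightly stronger statement that the $k$-plane $\Phi(t):=a^{\omega}_{-t}W_k(t)$ is \emph{constant} on $[t_0,\infty)$ and equal to $W$; the assertion $W_k(t)=a^{\omega}_t W$ is merely a reformulation of this. Two preliminary observations are needed. First, whenever $L^{\omega}_k(\Lambda,t)<L^{\omega}_{k+1}(\Lambda,t)$ the plane $W_k(t)$ is \emph{uniquely} determined: if two distinct $k$-dimensional subspaces were each spanned by $k$ linearly independent points of $\lambda_k(a^{\omega}_t\Lambda)\mathcal C\cap a^{\omega}_t\Lambda$, then the set $S$ of \emph{all} points of $a^{\omega}_t\Lambda$ lying in $\lambda_k(a^{\omega}_t\Lambda)\mathcal C$ would span the sum of these two subspaces, which has dimension $\ge k+1$, producing $k+1$ linearly independent lattice points of sup-norm $\le\lambda_k(a^{\omega}_t\Lambda)<\lambda_{k+1}(a^{\omega}_t\Lambda)$, a contradiction. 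In particular the hypothesis of the lemma guarantees that $\Phi(t)$ is well defined for every $t\ge t_0$. Second, $t\mapsto\lambda(a^{\omega}_t v)$ is continuous for each fixed $v\in\mathbb R^d$, and $t\mapsto\lambda_i(a^{\omega}_t\Lambda)$ is continuous (successive minima depend continuously on the lattice).

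For the base point: by hypothesis $\lambda(a^{\omega}_{t_0}w_i)=\exp L^{\omega}_i(\Lambda,t_0)\le\exp L^{\omega}_k(\Lambda,t_0)=\lambda_k(a^{\omega}_{t_0}\Lambda)$ for $1\le i\le k$, and $w_1,\dots,w_k$ are linearly independent; hence $a^{\omega}_{t_0}w_1,\dots,a^{\omega}_{t_0}w_k$ are $k$ linearly independent points of $\lambda_k(a^{\omega}_{t_0}\Lambda)\mathcal C\cap a^{\omega}_{t_0}\Lambda$ spanning the $k$-plane $a^{\omega}_{t_0}W$. By the uniqueness just noted, $W_k(t_0)=a^{\omega}_{t_0}W$, i.e. $\Phi(t_0)=W$.

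The core of the proof is the following rigidity statement: \emph{for any $t_1\ge t_0$ and any sequence $t_j\to t_1$ in $[t_0,\infty)$, some subsequence satisfies $\Phi(t_j)=\Phi(t_1)$.} To see it, for each $j$ pick $k$ linearly independent points $u^{(j)}_1,\dots,u^{(j)}_k$ of $\lambda_k(a^{\omega}_{t_j}\Lambda)\mathcal C\cap a^{\omega}_{t_j}\Lambda$ spanning $W_k(t_j)$, and set $v^{(j)}_i=a^{\omega}_{-t_j}u^{(j)}_i\in\Lambda$. Since $t\mapsto\lambda_k(a^{\omega}_t\Lambda)$ and $t\mapsto\|a^{\omega}_{-t}\|$ are bounded on a compact neighbourhood of $t_1$, all the vectors $v^{(j)}_i$ lie in a fixed \emph{finite} subset of $\Lambda$; passing to a subsequence we may assume $v^{(j)}_i=v_i$ is independent of $j$. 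Then $v_1,\dots,v_k$ are linearly independent, $\Phi(t_j)=\langle v_1,\dots,v_k\rangle$ for all $j$ in the subsequence, and letting $j\to\infty$ gives $\lambda(a^{\omega}_{t_1}v_i)=\lim_j\lambda(a^{\omega}_{t_j}v_i)\le\lim_j\lambda_k(a^{\omega}_{t_j}\Lambda)=\lambda_k(a^{\omega}_{t_1}\Lambda)$; so $a^{\omega}_{t_1}v_1,\dots,a^{\omega}_{t_1}v_k$ are $k$ linearly independent points of $\lambda_k(a^{\omega}_{t_1}\Lambda)\mathcal C\cap a^{\omega}_{t_1}\Lambda$, and by uniqueness $W_k(t_1)=a^{\omega}_{t_1}\langle v_1,\dots,v_k\rangle$, i.e. $\Phi(t_1)=\langle v_1,\dots,v_k\rangle=\Phi(t_j)$ along the subsequence. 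This rigidity statement forces $\Phi$ to be locally constant on $[t_0,\infty)$: if $\Phi$ were not constant near some $t_1$, there would be $t_j\to t_1$ with $\Phi(t_j)\ne\Phi(t_1)$ for all $j$, contradicting the existence of a subsequence with $\Phi(t_j)=\Phi(t_1)$. Being locally constant on the connected interval $[t_0,\infty)$, $\Phi$ is constant, and since $\Phi(t_0)=W$ we conclude $W_k(t)=a^{\omega}_t W$ for all $t\ge t_0$.

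The only genuine obstacle is the possibility that $W_k(t)$ ``jumps'' as $t$ grows; this is precisely what the hypothesis $L^{\omega}_k<L^{\omega}_{k+1}$ on all of $[t_0,\infty)$ prevents through the uniqueness observation, while the finiteness of the set of lattice points of bounded sup-norm is what upgrades continuity of the successive minima to genuine rigidity (local constancy) of $\Phi$. I expect the only fussy points in a careful write-up to be the uniform bounds near $t_1$ and the invocation of continuity of the successive minima.
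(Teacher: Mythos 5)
Your proof is correct, and since the paper does not reprove this lemma (it simply cites Schmidt--Summerer's Lemma 2.1), there is nothing in the source to diverge from: your argument — uniqueness of $W_k(t)$ when $\lambda_k<\lambda_{k+1}$, finiteness of lattice points of bounded norm to upgrade continuity of $t\mapsto\lambda_k(a^{\omega}_t\Lambda)$ to local constancy of $a^{\omega}_{-t}W_k(t)$, then connectedness of $[t_0,\infty)$ — is essentially the standard proof of this statement.
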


\subsection{Pencils}
In this subsection, we will collect some basic properties of pencils.

\begin{proposition}\label{Proposition: pencils are Zariski closed}
Given an $(m,n)$-weight vector $\omega$, $k\in \{1,\cdots,d-1\}$, $a,b\in \mathbb{R}$. Let $W$ be a $k$-dimensional linear subspace of $\mathbb{R}^d$. Then the pencil $P^{\omega}_{W,a,b}$ defined as in Definition \ref{Definition: pencils} is a Zariski closed algebraic variety of $M_{m,n}$. 
\end{proposition}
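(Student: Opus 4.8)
The plan is to reduce everything to the elementary fact that the rank of a matrix with polynomial (here: affine-linear) entries is lower semicontinuous for the Zariski topology — its sublevel sets being cut out by the vanishing of minors — together with the observation that a finite non-negative linear combination of lower semicontinuous functions is lower semicontinuous. Since $\mathcal{P}^{\omega}_{W,a,b}$ is the intersection of the two loci $\{x\in M_{m,n}:\psi(\ker\tilde x\cap W)\ge a\}$ and $\{x\in M_{m,n}:\phi(\tilde x W)\le b\}$, it suffices to prove each of these is Zariski closed, and for that I will express $\phi(\tilde x W)$ and $\psi(\ker\tilde x\cap W)$ as linear combinations of ranks of explicit affine-linear matrices whose coefficients have signs governed by the monotonicity built into the definition of an $(m,n)$-weight vector.

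Concretely, fix a basis $w_1,\dots,w_k$ of $W$. The entries of $u(x)w_j$ are affine-linear in the entries of $x$, while in rows $m+1,\dots,d$ one has $u(x)w_j=w_j$ since the lower $n\times n$ block of $u(x)$ is the identity. For $0\le j\le m$ let $\sigma_j(x)=\operatorname{rank}R_j(x)$, where $R_j(x)$ is the matrix of the first $j$ rows of $(u(x)w_1\mid\cdots\mid u(x)w_k)$; then $\sigma_j(x)=\dim\pi_{\{1,\dots,j\}}(\tilde x W)$. For $m\le i\le d$ let $\rho_i(x)=\operatorname{rank}\left(\begin{smallmatrix}R_m(x)\\ S_i\end{smallmatrix}\right)$, where $S_i$ is the matrix of rows $i+1,\dots,d$ of $(w_1\mid\cdots\mid w_k)$; identifying $W$ with $\mathbb{R}^k$ via the chosen basis, $(\ker\tilde x\cap W)\cap\langle e_1,\dots,e_i\rangle$ is the common kernel of $R_m(x)$ and $S_i$ (a vector $\sum c_jw_j$ lies in $\ker\tilde x$ iff $R_m(x)\,{}^t(c_1,\dots,c_k)=0$ and in $\langle e_1,\dots,e_i\rangle$ iff $S_i\,{}^t(c_1,\dots,c_k)=0$), so $\dim\big((\ker\tilde x\cap W)\cap\langle e_1,\dots,e_i\rangle\big)=k-\rho_i(x)$; in particular $\rho_m(x)\equiv k$ and $\rho_d(x)=\operatorname{rank}R_m(x)$. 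Because the consecutive differences $\sigma_j-\sigma_{j-1}$ and $\rho_{i-1}-\rho_i$ lie in $\{0,1\}$, the definitions of $J(\tilde x W)$ and $I(\ker\tilde x\cap W)$ unwind to
\[
\phi(\tilde x W)=\sum_{j=1}^{m}\omega_j\big(\sigma_j(x)-\sigma_{j-1}(x)\big),\qquad \psi(\ker\tilde x\cap W)=\sum_{i=m+1}^{d}\omega_i\big(\rho_{i-1}(x)-\rho_i(x)\big).
\]

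Now I apply summation by parts. For $\phi$, using $\sigma_0\equiv 0$, this gives
\[
\phi(\tilde x W)=\omega_m\,\sigma_m(x)+\sum_{j=1}^{m-1}(\omega_j-\omega_{j+1})\,\sigma_j(x),
\]
which, since $\omega_1\ge\cdots\ge\omega_m>0$, is a non-negative linear combination of ranks of affine-linear matrices, hence lower semicontinuous in $x$; therefore $\{x\in M_{m,n}:\phi(\tilde x W)\le b\}$ is Zariski closed. Symmetrically, using $\rho_m\equiv k$,
\[
\psi(\ker\tilde x\cap W)=\omega_{m+1}k-\omega_d\operatorname{rank}R_m(x)+\sum_{i=m+1}^{d-1}(\omega_{i+1}-\omega_i)\,\rho_i(x),
\]
and since $\omega_{m+1}\ge\cdots\ge\omega_d>0$ every non-constant term has a non-positive coefficient, so $-\psi(\ker\tilde x\cap W)$ is lower semicontinuous and $\{x\in M_{m,n}:\psi(\ker\tilde x\cap W)\ge a\}$ is Zariski closed. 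Intersecting the two closed sets gives $\mathcal{P}^{\omega}_{W,a,b}$, which is therefore Zariski closed.

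The step I expect to require the most care is the bookkeeping behind the rank identities: verifying that $\pi_{\{1,\dots,j\}}(\tilde x W)$ is the column span of $R_j(x)$ and that $(\ker\tilde x\cap W)\cap\langle e_1,\dots,e_i\rangle$ is the common kernel of $R_m(x)$ and $S_i$, and then checking that after summation by parts the coefficients land on the correct side of $0$ — this last point is where the monotonicity of $\omega$ is used in an essential way. An alternative route, closer in spirit to the rest of the paper, is to observe via the matroid greedy algorithm that $\phi(\tilde x W)=\max\{\sum_{j\in J}\omega_j:\ \pi_J(\tilde x W)=\mathbb{R}^J\}$, so that $\{x:\phi(\tilde x W)\le b\}=\bigcap_{\sum_{j\in J}\omega_j>b}\{x:\pi_J(\tilde x W)\ne\mathbb{R}^J\}$ is a finite intersection of the Zariski closed loci where the corresponding minors of $(\tilde x w_1\mid\cdots\mid\tilde x w_k)$ vanish, and similarly for $\psi$.
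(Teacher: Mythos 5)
Your proof is correct, and it takes a genuinely different route from the paper's. The paper proves that each of the two defining loci is Zariski closed by exhibiting it as an explicit finite union, over admissible index tuples, of intersections of dimension conditions along the coordinate flags (Claims \ref{Claim: psi} and \ref{Claim: phi}), and the monotonicity of the weights enters in verifying the two set inclusions of those claims. You instead derive closed-form formulas $\phi(\tilde xW)=\sum_{j}\omega_j(\sigma_j-\sigma_{j-1})$ and $\psi(\ker\tilde x\cap W)=\sum_i\omega_i(\rho_{i-1}-\rho_i)$ --- your identifications of $\sigma_j$ with $\dim\pi_{\{1,\dots,j\}}(\tilde xW)$ and of $k-\rho_i$ with $\dim((\ker\tilde x\cap W)\cap\langle e_1,\dots,e_i\rangle)$ are correct, including the boundary values $\sigma_0=0$ and $\rho_m=k$ (the latter because $\ker\tilde x\cap\langle e_1,\dots,e_m\rangle=0$) --- and then use Abel summation so that the monotonicity of $\omega$ shows up as a sign condition on the coefficients of the ranks. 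The one point you leave implicit is why a non-negative integer combination $f=\sum c_j\sigma_j$ of rank functions has Zariski closed sublevel sets; this is true, but the justification is exactly a finite union over value tuples, $\{f\le b\}=\bigcup_{\sum c_jr_j\le b}\bigcap_j\{\sigma_j\le r_j\}$, which is in effect the combinatorial decomposition the paper writes down directly. So the two arguments rest on the same two ingredients (minor-vanishing loci are closed; the weights are sorted), but yours packages the combinatorics into a single summation-by-parts identity, which is shorter and makes the role of the monotonicity more transparent, while the paper's version hands you the explicit variety equations for each pencil. Your sketched alternative via the greedy description $\phi(\tilde xW)=\max\{\sum_{j\in J}\omega_j:\pi_J(\tilde xW)=\mathbb{R}^J\}$ is also sound and is essentially the mechanism behind the paper's Lemma \ref{Lemma: projection is onto iff component is nonzero} and Lemma \ref{Lemma: highest eigenvalue determined by pencil}.
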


\begin{remark}\label{Remark: finitely many pencils}
By the definition of $\psi,\phi$ ( see (\ref{align: definition of psi and phi}) ), $\psi$ and $\phi$ only take finitely many values. Therefore, for any $k$-dimensional linear subspace $W$, the family $\mathcal{P}^{\omega}_W$ of pencils defined by
    \[\mathcal{P}^{\omega}_W=\{\mathcal{P}^{\omega}_{W,a,b}:a,b\in \mathbb{R}\}\]
    has only finitely many elements.
\end{remark}

Proposition \ref{Proposition: pencils are Zariski closed} is well-known to experts. For completeness, we include a proof. Since $P^{\omega}_{W,a,b}=\{x\in  M_{m,n}:\psi(\ker\tilde{x}\cap W)\geq a\}\cap \{x\in M_{m,n}:\phi(\Tilde{x}W)\leq b\}$, it suffices to prove both these sets are Zariski closed. Firstly, let us prove that $\{x\in  M_{m,n}:\psi(\ker\tilde{x}\cap W)\geq a\}$ is Zariski closed.

\begin{lemma}
Let $\omega,k,a,W$ be as in Proposition \ref{Proposition: pencils are Zariski closed}. Then $\{x\in  M_{m,n}:\psi(\ker\tilde{x}\cap W)\geq a\}$ is Zariski closed.
\end{lemma}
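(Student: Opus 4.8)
The plan is to show that the function $x\mapsto\psi(\ker\tilde{x}\cap W)$ is upper semicontinuous for the Zariski topology on $M_{m,n}$. Since this function takes only finitely many values (Remark~\ref{Remark: finitely many pencils}), each superlevel set $\{x:\psi(\ker\tilde{x}\cap W)\ge a\}$ will then be a finite union of Zariski closed sets, hence Zariski closed.

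First I would strip the dependence on $x$ out of the $I(\cdot)$-bookkeeping by intersecting with the coordinate flag \emph{before} applying $\tilde{x}$. For $0\le i\le d$ set $W_i=W\cap\langle e_1,\dots,e_i\rangle$, a subspace of $\mathbb{R}^d$ that does \emph{not} depend on $x$, and put $d_i(x)=\dim\bigl(\ker(\tilde{x}|_{W_i})\bigr)=\dim(\ker\tilde{x}\cap W\cap\langle e_1,\dots,e_i\rangle)$. Because $\tilde{x}=(\mathbb{I}_m\mid x)$ is injective on $\langle e_1,\dots,e_m\rangle$, one has $d_i(x)=0$ for $i\le m$; and for $i\ge m+1$ the increment $d_i(x)-d_{i-1}(x)\in\{0,1\}$ is precisely the indicator that $i\in I(\ker\tilde{x}\cap W)$. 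Hence
$$\psi(\ker\tilde{x}\cap W)=\sum_{i=m+1}^{d}\omega_i\bigl(d_i(x)-d_{i-1}(x)\bigr),$$
and Abel summation, using $d_m(x)=0$, rewrites this as
$$\psi(\ker\tilde{x}\cap W)=\omega_d\,d_d(x)+\sum_{i=m+1}^{d-1}(\omega_i-\omega_{i+1})\,d_i(x),$$
a linear combination of the $d_i(x)$ with \emph{nonnegative} coefficients, since $\omega_{m+1}\ge\cdots\ge\omega_d>0$.

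Next I would verify that each $d_i$ is Zariski upper semicontinuous, i.e.\ that $\{x:d_i(x)\ge c\}$ is Zariski closed for every $c$. Fixing once and for all a basis $b_1,\dots,b_{\dim W_i}$ of $W_i$, the linear map $\tilde{x}|_{W_i}$ is represented by the $m\times\dim W_i$ matrix whose $j$th column is $\tilde{x}b_j$; writing $b_j$ as its first $m$ coordinates followed by its last $n$ coordinates, $\tilde{x}b_j$ equals the former plus $x$ applied to the latter, so every entry of this matrix is an affine-linear function of the entries of $x$. As $d_i(x)=\dim W_i-\operatorname{rank}(\tilde{x}|_{W_i})$, the condition $d_i(x)\ge c$ is the simultaneous vanishing of all $(\dim W_i-c+1)\times(\dim W_i-c+1)$ minors of this matrix, which is a Zariski closed condition on $x$.

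Finally I would assemble the pieces. Writing $\psi(\ker\tilde{x}\cap W)=\sum_i c_i d_i(x)$ with all $c_i\ge0$, each $d_i$ integer-valued and bounded above by $\dim W_i$, one checks directly that
$$\{x:\psi(\ker\tilde{x}\cap W)\ge a\}=\bigcup_{(c'_i)}\ \bigcap_{i}\{x:d_i(x)\ge c'_i\},$$
the union ranging over the \emph{finitely many} integer tuples $(c'_i)$ with $0\le c'_i\le\dim W_i$ and $\sum_i c_ic'_i\ge a$; this is a finite union of Zariski closed sets, hence Zariski closed. I expect the only genuine point to be the reformulation in the second paragraph: replacing the $x$-dependent subspace $\ker\tilde{x}\cap W$ by the restrictions of $\tilde{x}$ to the \emph{fixed} subspaces $W_i$ turns $x\mapsto d_i(x)$ into an honest corank function, so that no elimination theory is needed; everything afterwards is Abel summation and the elementary fact that a nonnegative linear combination of finitely-valued Zariski upper semicontinuous functions is again Zariski upper semicontinuous.
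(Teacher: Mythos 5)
Your proof is correct, and while it rests on the same algebraic-geometric building blocks as the paper's, the combinatorial assembly is genuinely different and cleaner. Both arguments reduce to showing that the corank conditions $\{x:\dim(\ker\tilde{x}\cap W\cap\langle e_1,\dots,e_i\rangle)\geq c\}$ are Zariski closed via vanishing of minors of a matrix with entries affine in $x$ (your $\tilde{x}|_{W_i}$ is exactly the paper's $\tilde{x}W_q$), and both use the observation that $\tilde{x}$ is injective on $\langle e_1,\dots,e_m\rangle$ together with the monotonicity $\omega_{m+1}\geq\cdots\geq\omega_d$. The difference is in how the superlevel set of $\psi$ is expressed in terms of these closed sets: the paper enumerates the candidate index sets $I(\ker\tilde{x}\cap W)$ through the families $\mathcal{I}_s(a)$ and proves a set-theoretic identity (its Claim on $\psi$) by a two-sided inclusion, the harder direction of which requires an inductive argument producing indices $r_j\leq i_j$ lying in $I(\ker\tilde{x}\cap W)$ and then invoking the decreasing weights to compare $\sum\omega_{m+r_j}$ with $\sum\omega_{m+i_j}$. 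Your Abel summation absorbs that monotonicity once and for all into the nonnegativity of the coefficients $\omega_i-\omega_{i+1}$, after which the identity $\{\psi\geq a\}=\bigcup_{(c'_i)}\bigcap_i\{d_i\geq c'_i\}$ is immediate in both directions (take $c'_i=d_i(x)$ for one inclusion, and use nonnegativity of the coefficients for the other). What your approach buys is the elimination of the case analysis and induction in the paper's claim, replaced by the general principle that a nonnegative linear combination of integer-valued, bounded, Zariski upper semicontinuous functions is upper semicontinuous; what the paper's formulation buys is a more explicit description of the closed set in terms of the weight data $\mathcal{I}_s(a)$, which mirrors the structure of the companion lemma for $\phi$.
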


\begin{proof}
For simplicity, we denote $E_i=\langle e_1,\cdots,e_i \rangle$ for $m+1\leq i\leq d$. We first make the following observation: since $\tilde{x}\in M_{m,d}$ is the first $m$ rows of $u(x)$, by the definition of $u(x)$ we have $\ker \Tilde{x}\cap E_m=\{0\}$. Moreover, we have $\dim(\ker \Tilde{x}\cap W)+\dim(\Tilde{x}W)=\# I(\ker \Tilde{x}\cap W)+\# J(\Tilde{x}W)=k$. For each $s\in \{1,\cdots,n\}$, let 
\[\mathcal{I}_s(a)=\{(i_1,\cdots,i_s):\omega_{m+i_1}+\cdots+\omega_{m+i_s}\geq a,  i_1<\cdots<i_s\in \{1,\cdots,n\}\}.\]
Then the cardinality of $\mathcal{I}_s(a)$ is finite. We have the following claim:
\begin{claim}\label{Claim: psi}
    \begin{align*}
 &\{x\in M_{m,n}:\psi(\ker \Tilde{x}\cap W)\geq a\}\\
&=\bigcup_{s=1}^n\bigcup_{(i_1,\cdots,i_s)\in \mathcal{I}_s(a)}\bigcap_{j=1}^s \{x\in M_{m,n}:\dim(\ker \tilde{x}\cap W\cap E_{m+i_j})\geq j\}.  
    \end{align*}
\end{claim}
Assume Claim \ref{Claim: psi} holds. Then it suffices to prove that for any $1\leq j\leq i$, the set
\[\{x\in M_{m,n}:\dim(\ker \tilde{x}\cap W\cap E_{m+i})\geq j\}\]
is Zariski closed. Indeed, we may write 
\[W\cap E_{m+i}=\langle w_1,\cdots,w_q \rangle,\]
for $q=\dim (W\cap E_{m+i})$ and $w_1,\cdots,w_q\in W$ linearly independent. We further denote
\[W_q=\left(w_1,\cdots,w_q\right)\in M_{d,q}\]
to be a $d\times q$ matrix whose $j$-th column is $w_j$. Note that for $x\in M_{m,n}$, we have $\dim(\ker \Tilde{x}\cap W \cap E_{m+i})\geq j$ if and only if $\dim(\Tilde{x} (W\cap E_{m+i}))\leq q-j$, which is further equivalent to $\rm{rank}(\Tilde{x} W_q)\leq q-j$. Observe that each entry of $\Tilde{x}W_q\in M_{m,q}$ is a linear equation of entries of $x$, and $\rm{rank}(\Tilde{x}W_q)\leq q-j$ is an algebraic condition. Therefore, $\{x\in M_{m,n}:\dim(\ker \tilde{x}\cap W\cap E_{m+i})\geq j\}$ is a Zariski closed algebraic variety.

Now it remains to prove Claim \ref{Claim: psi}. Note that if for all $s\in \{1,\cdots,n\}$, $\mathcal{I}_s(a)=\emptyset$, then both sets in Claim \ref{Claim: psi} are empty and there is nothing to prove. Therefore, we may assume there exists $s\in \{1,\cdots,n\}$ such that $\mathcal{I}_s(a)\neq \emptyset$. If $x\in M_{m,n}$ is such that $\psi(\ker \Tilde{x}\cap W)\geq a$, then $\sum_{i\in I(\ker \Tilde{x}\cap W)} \omega_i\geq a$. Let $s=\dim(\ker\Tilde{x}\cap W)$ and $I(\ker\tilde{x}\cap W)=\{m+i_1,\cdots,m+i_s\}$. Then $(i_1,\cdots,i_s)\in \mathcal{I}_s(a)$. By the definition of $I(\ker\tilde{x}\cap W)$, for any $1\leq j\leq s$ we have 
\[\dim(\ker\Tilde{x}\cap W\cap E_{m+i_j})>\dim(\ker\Tilde{x}\cap W\cap E_{m+i_j-1}).\]
As $\dim(\ker \Tilde{x}\cap W\cap E_m)=0$, we have $\dim(\ker\Tilde{x}\cap W\cap E_{m+i_j})=j$ and this proves one side inclusion of the sets.

Let $x\in M_{m,n}$ be such that there exist $s\in \{1,\cdots,n\}$, $(i_1,\cdots,i_s)\in \mathcal{I}_s(a)$ such that for all $1\leq j\leq s$, $\dim(\ker\Tilde{x}\cap W\cap E_{m+i_j})\geq j$. Then by induction we see that for any $1\leq j \leq s$, there exists $1\leq r_j\leq i_j$ such that 
\[\dim(\ker \Tilde{x}\cap W\cap E_{m+r_j-1})<\dim(\ker \Tilde{x}\cap W\cap E_{m+r_j}),\]
and $r_1<\cdots<r_s$. Thus $\{m+r_1,\cdots,m+r_s\}\subset I(\ker\Tilde{x}\cap W)$. As $\omega_{m+1}\geq \cdots\geq\omega_{d}>0$, we have $\sum_{j=1}^s \omega_{m+r_j}\geq \sum_{j=1}^{s}\omega_{m+i_j}\geq a$. This proves the claim, and finishes the proof of the lemma.
\end{proof}

\begin{lemma}
Let $\omega,k,b,W$ be as in Proposition \ref{Proposition: pencils are Zariski closed}. Then $\{x\in  M_{m,n}:\phi(\tilde{x} W)\leq b\}$ is Zariski closed.
\end{lemma}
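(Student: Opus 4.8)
The plan is to mirror and dualize the proof of the previous lemma: I will reduce the condition $\phi(\tilde{x}W)\le b$ to a finite boolean combination of conditions of the form ``$\mathrm{rank}\le \ell$'' on matrices whose entries are linear forms in the entries of $x$. The point is that $\phi(\tilde{x}W)$ can be rewritten as a nonnegative combination of a monotone family of such ranks.

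For $1\le j\le m$ and $x\in M_{m,n}$, set
\[
\rho_j(x)=\dim \pi_{\{1,\dots,j\}}(\tilde{x}W),
\]
the dimension of the image of $\tilde{x}W$ under the coordinate projection onto $\langle e_1,\dots,e_j\rangle$, and put $\rho_0(x)=0$. Applying rank--nullity to $\pi_{\{1,\dots,j-1\}}$ restricted to $\tilde{x}W$, whose kernel is $\tilde{x}W\cap\langle e_j,\dots,e_m\rangle$ since $\tilde{x}W\subset\langle e_1,\dots,e_m\rangle$, one gets $\dim(\tilde{x}W\cap\langle e_j,\dots,e_m\rangle)=\dim \tilde{x}W-\rho_{j-1}(x)$. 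Comparing this formula for $j$ and $j+1$ and invoking the definition of $J(\tilde{x}W)$, it follows that $\rho_j(x)-\rho_{j-1}(x)$ equals $1$ if $j\in J(\tilde{x}W)$ and $0$ otherwise. An Abel summation then yields the identity
\[
\phi(\tilde{x}W)=\sum_{j\in J(\tilde{x}W)}\omega_j=\sum_{j=1}^m(\omega_j-\omega_{j+1})\,\rho_j(x),\qquad\text{with }\omega_{m+1}:=0,
\]
and since $\omega$ is an $(m,n)$-weight vector, every coefficient $c_j:=\omega_j-\omega_{j+1}$ is $\ge 0$.

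Using that each $\rho_j(x)$ is an integer with $0\le\rho_j(x)\le\min(j,k)$, I would then write
\[
\{x\in M_{m,n}:\phi(\tilde{x}W)\le b\}=\bigcup_{(\ell_1,\dots,\ell_m)}\ \bigcap_{j=1}^m\{x\in M_{m,n}:\rho_j(x)\le\ell_j\},
\]
the union being over all integer vectors $(\ell_1,\dots,\ell_m)$ with $0\le\ell_j\le\min(j,k)$ and $\sum_{j=1}^m c_j\ell_j\le b$; this is a finite index set, possibly empty, in which case both sides are $\emptyset$ because $\phi\ge 0$. The inclusion $\subseteq$ is obtained by taking $\ell_j=\rho_j(x)$, and $\supseteq$ follows from $c_j\ge 0$, which gives $\phi(\tilde{x}W)=\sum_j c_j\rho_j(x)\le\sum_j c_j\ell_j\le b$ on the corresponding intersection. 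It then remains to see that $\{x:\rho_j(x)\le\ell_j\}$ is Zariski closed: fixing a basis $w_1,\dots,w_k$ of $W$ and letting $W_k=(w_1,\dots,w_k)\in M_{d,k}$, the number $\rho_j(x)$ is the rank of the $j\times k$ matrix formed by the first $j$ rows of $\tilde{x}W_k$, whose entries are linear in the entries of $x$, so ``$\mathrm{rank}\le\ell_j$'' is the vanishing of all $(\ell_j+1)$-minors, an algebraic condition. As a finite union of finite intersections of Zariski closed sets is Zariski closed, this finishes the proof, and together with the previous lemma establishes Proposition \ref{Proposition: pencils are Zariski closed}.

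The only genuinely nonroutine step is the identity $\phi(\tilde{x}W)=\sum_{j=1}^m c_j\rho_j(x)$, namely translating the flag/pivot description of $J(\tilde{x}W)$ into the ranks $\rho_j$; once that is in place the rest is the same bookkeeping as in the proof for $\psi$, with the direction of the flag reversed and the closed condition ``$\mathrm{rank}\le\ell$'' taking the place of ``$\dim\ge j$''.
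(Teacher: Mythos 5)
Your proof is correct, and it takes a genuinely different (and arguably cleaner) route than the paper's. The paper argues on the complement: it shows $\{x:\phi(\tilde{x}W)>b\}$ is Zariski open by writing it as a finite union, over candidate pivot sets $j_1<\cdots<j_s$ with $\sum_i\omega_{j_i}>b$, of intersections of conditions $\dim(\tilde{x}W\cap\langle e_{j_i+1},\dots,e_m\rangle)<s-i+1$, each the complement of a rank condition; the reverse inclusion there needs a small induction producing a subset of $J(\tilde{x}W)$ dominating $(j_1,\dots,j_s)$. Your key step is instead the exact identity $\phi(\tilde{x}W)=\sum_{j=1}^m(\omega_j-\omega_{j+1})\rho_j(x)$, where $\rho_j(x)$ is the rank of the first $j$ rows of $\tilde{x}W_k$; this is correct (rank--nullity gives $\dim(\tilde{x}W\cap\langle e_j,\dots,e_m\rangle)=\dim\tilde{x}W-\rho_{j-1}(x)$, so $j\in J(\tilde{x}W)$ exactly when $\rho_j-\rho_{j-1}=1$, and the Abel summation is routine), and since the coefficients $\omega_j-\omega_{j+1}$ are nonnegative and each $\rho_j$ is Zariski lower semicontinuous, $\phi(\tilde{x}\,\cdot\,)$ is lower semicontinuous and its sublevel sets are closed --- your finite union over rank vectors $(\ell_1,\dots,\ell_m)$ is exactly this semicontinuity made explicit, with both inclusions verified correctly. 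What your approach buys is a closed-form expression for $\phi$ in terms of ranks of submatrices, from which Zariski closedness is immediate and the combinatorics is trivial; what the paper's version buys is uniformity with its treatment of $\psi$ in the preceding lemma, where the analogous quantities are dimensions of intersections along the flag rather than of projections. Both ultimately reduce to the same algebraic conditions (vanishing of minors of submatrices of $\tilde{x}W_k$), so either proof is acceptable.
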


\begin{proof}
For simplicity, we denote $E'_j=\langle e_j,\cdots, e_m \rangle$ for $1\leq j\leq m$ and $E'_{m+1}=0$. For each $1\leq s \leq m$, we let
\[\mathcal{J}_s(b)=\{(j_1,\cdots,j_s):\sum_{i=1}^s \omega_{j_i}> b, \quad j_1<\cdots<j_s\in \{1,\cdots,m\}\}.\]
The cardinality of $\mathcal{J}_s(b)$ is finite. First we observe that if $b<0$, then $\{x\in  M_{m,n}:\phi(\tilde{x} W)\leq b\}$ is empty and there is nothing to prove. Now suppose $b>0$, we have the following claim:
\begin{claim}\label{Claim: phi}
\begin{align*}
    &\{x\in M_{m,n}:\phi(\Tilde{x}W)> b\}=\{x\in M_{m,n}:\dim(\Tilde{x}W)>0\}\cap\\
&\bigcup_{s=1}^m\bigcup_{(j_1,\cdots,j_s)\in \mathcal{J}_s(b)}\bigcap_{i=1}^s \{x\in M_{m,n}:\dim(\Tilde{x}W\cap E'_{j_i+1})< s-i+1\}.
\end{align*}
\end{claim}

Assume Claim \ref{Claim: phi} holds. Then it suffices to prove that $\{x\in M_{m,n}:\dim(\Tilde{x}W)=0\}$ and $\{x\in M_{m,n}:\dim(\Tilde{x}W\cap E'_{j})\geq i\}$ for $1\leq i,j\leq m$ are all Zariski closed. For this, suppose $W$ is spanned by vectors $w_1,\cdots,w_k$ and we let $W_k=(w_1,\cdots,w_k)\in M_{d,k}$ be a $d\times k$ matrix whose $i$-th column is $w_i$. Then $\Tilde{x}W_k\in M_{m,k}$. Note that $x\in M_{m,n}$ satisfies $\dim(\Tilde{x}W)=0$ if and only if $\rm{rank}(\tilde{x}W_k)=0$. As each entry of $\Tilde{x}W_k$ is a linear equation of entries of $\Tilde{x}$, $\rm{rank}(\tilde{x}W_k)=0$ is an algebraic condition on $x$. Hence $\{x\in M_{m,n}:\dim(\Tilde{x}W)=0\}$ is Zariski closed. 

We observe that for $x\in M_{m,n}$, $\dim(\tilde{x}W\cap E'_j)\geq i$ if and only if the first $j-1$ rows of the matrix $\Tilde{x}W_k$ has rank $\leq k-i$. This is an algebraic condition on $x$. Hence, $\{x\in M_{m,n}:\dim(\Tilde{x}W\cap E'_{j})\geq i\}$ is Zariski closed.

Now it remains to prove Claim \ref{Claim: phi}. Suppose $x\in M_{m,n}$ satisfies $\phi(\Tilde{x}W)> b>0$. We have $\dim(\Tilde{x}W)\geq 1$ and $J(\Tilde{x}W)=\{j_1,\cdots,j_s\}$ for some $j_1<\cdots<j_s\in \{1,\cdots,m\}$ with $\sum_{i=1}^s \omega_{j_i}> b$. By the definition of $J(\Tilde{x}W)$, we have
\[\dim(\Tilde{x}W\cap E'_{j_i})=s-i+1, \quad 1\leq i\leq s.\]
Therefore,
\[\dim(\Tilde{x}W\cap E'_{j_i+1})<s-i+1, \quad 1\leq i\leq s.\]
This finishes the proof of one side inclusion of the claim. 

Suppose that $x\in M_{m,n}$ satisfies that $\dim(\Tilde{x}W)\geq 1$ and there exist $s\in \{1,\cdots,m\}$, $(j_1,\cdots,j_s)\in \mathcal{J}_s(b)$ such that $\dim(\Tilde{x}W\cap E'_{j_i+1})< s-i+1$ for all $1\leq i\leq s$. By induction, we obtain $1\leq r_1<\cdots<r_s\leq m$ such that $r_i\leq j_i$ for $1\leq i \leq s$ and
\[\dim(\Tilde{x}W\cap E'_{r_i+1})<\dim(\Tilde{x}W\cap E'_{r_i}), \quad 1\leq i\leq s.\]
Therefore, $\{r_1,\cdots,r_s\}\subset J(\Tilde{x}W)$. As $\omega_1\geq \cdots\geq \omega_m>0$ and $\sum_{i=1}^s \omega_{j_i}> b$, we obtain $\phi(\Tilde{x}W)\geq\sum_{i=1}^s \omega_{r_i}> b$. The lemma is proven.
\end{proof}

\section{Proofs of main theorems and their corollaries }\label{Section: proof of main theorem and corollary}

\begin{proof}[Proof of Theorem \ref{Theorem: algebraic connectedness}]
 Suppose that the assumption of Theorem \ref{Theorem: algebraic connectedness} is satisfied for the lattice $\Lambda$ and some $k\in \{1,\cdots,d-1\}$. We proceed by contradiction. Assume that $\boldsymbol{L}^{\omega}(\Lambda,\cdot)$ is disconnected at $k$, then there exists $t_0\geq 0$ such that for all $t\geq t_0$, $L^{\omega}_k(\Lambda,t)<L^{\omega}_{k+1}(\Lambda,t)$. By Lemma \ref{Lemma: consequence of disconnectedness}, there is a $k$-dimensional space $W$ spanned by vectors of $\Lambda$ such that $W_k(t)=a^{\omega}_t W$ for all $t\geq t_0$. Now for any $t\geq t_0$, {let $\{w_i(t)\in a_t^{\omega}\Lambda\setminus \{0\}:1\leq i\leq k\}$ be the set of vectors such that they span the same subspace as the vectors in $a^{\omega}_{-t}(\lambda_k(a^{\omega}_t \Lambda)\mathcal{C}\cap a^{\omega}_{t}\Lambda \setminus \{0\})$.}Therefore, {$w_1(t)\wedge \cdots \wedge w_k(t)=v_W$, and} by (\ref{align: Mahler's theorem}) we have
 \begin{align*}
      L^{\omega}_1(\Lambda^k,t)\asymp_{+,d} L^{\omega}(v_W, t).
 \end{align*}
By assumption, the largest eigenvalue in the decomposition of $v_W$ into eigenvectors of $a^{\omega}_1$ is $\exp(\mu)$ for some $\mu>0$. Hence, by the definition of $ L^{\omega}(v_W, t)$,
\begin{align}\label{align: L1 is too large}
    L^{\omega}_1(\Lambda^k,t)\asymp_{+,d} L^{\omega}(v_W, t)>t \mu/2
\end{align}
for all large enough $t\geq t_0$. {However, since $\Lambda^k$ is unimoduar and $a^{\omega}_t$ is of determinant $1$, one always has $L^{\omega}_1(\Lambda^k,t)\leq 0$.} 
Therefore, (\ref{align: L1 is too large}) yields the desired contradiction.
\end{proof}

\begin{proof}[Proof of Theorem \ref{Theorem: generic behavior of connectedness}]
By Proposition \ref{Proposition: pencils are Zariski closed}, any pencil is an algebraic variety of $M_{m,n}$. Let $W$ be a $k$-dimensional linear subspace defined over $\mathbb{Q}$. Since $\mathcal{M}$ is a connected analytic submanifold and not contained in any proper rational weakly constraining $k$-pencil, we conclude that for any $a,b$ satisfying $b\leq a$, $\mathcal{M}\cap\mathcal{P}^{\omega}_{W,a,b}$ is a proper algebraic subvariety of $\mathcal{M}$. Thus ${\mathcal{M}}\cap\mathcal{P}^{\omega}_{W,a,b}$ has Lebesgue measure zero in ${\mathcal{M}}$. Since there are only finitely many pencils in $\mathcal{P}^{\omega}_W=\{\mathcal{P}^{\omega}_{W,a,b}:a,b\in \mathbb{R}\}$ for fixed $W$ (see Remark \ref{Remark: finitely many pencils}), and there are only countably many $k$-dimensional linear subspaces defined over $\mathbb{Q}$, we conclude that the intersection of ${\mathcal{M}}$ with the union of all rational weakly constraining $k$-pencils has Lebesgue measure zero.

As a consequence, Lebesgue almost every point $x\in \mathcal{M}$ is not contained in any proper rational weakly constraining $k$-pencil. By Lemma \ref{Lemma: highest eigenvalue determined by pencil}, for any $k$-dimensional subspace $W$ defined over $\mathbb{Q}$, the largest eigenvalue of $u(x)v_W$ in the decomposition into eigenvectors of $a^{\omega}_1$ is $\exp(\phi(\Tilde{x}W)-\psi(\ker \Tilde{x}\cap W))$, which is strictly greater than $1$. Hence, Theorem \ref{Theorem: algebraic connectedness} implies $\boldsymbol{L}^{\omega}(\Lambda_x,\cdot)$ is connected at $k$.
\end{proof}

\begin{proof}[Proof of Corollary \ref{Corollary: a manifold not contained in affine plane is connected}]

When $m=1$, $M_{1,d}(\mathbb{R})\cong \mathbb{R}^d$. Hence, we can view any $\Tilde{x}\in M_{1,d}(\mathbb{R})$ as a vector in $\mathbb{R}^d$. Given a $k$-dimensional rational linear subspace $W$ of $\mathbb{R}^d$, if $x\in \mathcal{P}^{\omega}_{W,a,b}$ for some $b\leq a$, then necessarily $\psi(\ker \Tilde{x}\cap W)> 0$. So there exists a nonzero $w\in W$ such that $\Tilde{x}\boldsymbol{\cdot} w=0$, where $\boldsymbol{\cdot}$ denotes the standard inner product. Therefore, ${x}$ is contained in a proper affine hyperplane of $\mathbb{R}^d$. As a consequence, when $m=1$, any proper rational weakly constraining pencil is contained in a proper affine hyperplane of $\mathbb{R}^d$.

If ${\mathcal{M}}$ is not contained in any proper affine hyperplane of $\mathbb{R}^{d}$, then the assumption of Theorem \ref{Theorem: generic behavior of connectedness} is satisfied for any $k\in \{1,\cdots,d-1\}$. Therefore, the corollary follows.

\end{proof}

\begin{proof}[Proof of Corollary \ref{Corollary: connectedness in 1 by d}]
Let $A=u(x)$, that is,
\[A=\begin{pmatrix}
        1 & & & \xi_1\\
          & \ddots & & \vdots\\
        & &   1 & \xi_{d-1}\\
         & & & 1
    \end{pmatrix}.\]
Let $k\in \{1,\cdots, d-1\}$. In what follows $I,J$ are subsets of $\{1,\cdots, d\}$ of cardinality $k$. It is then straightforward to verify the following: 
\begin{itemize}
    \item[1)] If $d\in I$ and $d\notin J$, then $\det A(I,J)=0$.

    \item[2)] If $d\notin I$, then
\begin{align}\label{align: value of det A(I,J)}
    \det A(I,J)=\begin{cases}
        1 & \text{ if } I=J\\
        0 & \text{ if } d\notin J, I\neq J \text{ or } d\in J, \# I\setminus J \geq 2\\
        \pm \xi_{i_s} & \text{ if } d\in J, \{i_s\}=I\setminus J
    \end{cases}
\end{align}
The sign of $\xi_{i_s}$ depends on $I,J$. It will become apparent that the sign of $\xi_{i_s}$ does not play any role in our analysis.
\end{itemize}

Let $W$ be a $k$-dimensional linear subspace spanned by vectors of $\mathbb{Z}^d$ and $v_W=w_1\wedge\cdots \wedge w_k\in \bigwedge^k \mathbb{Z}^d$ represent $W$. We want to show that for any such $W$, there exists $I\subset \{1,\cdots, d-1\}$ of cardinality $k$ such that the $e_I$-component of $Av_W$ is nonzero. This would imply that $x$ is not contained in any proper weakly rational constraining $k$-pencil.

First we make reduction on the assumption that there exist $\{n_1,\cdots,n_k\}\subset \{1,\cdots,d-1\}$ such that $1,\xi_{n_1},\cdots,\xi_{n_k}$ are linearly independent over $\mathbb{Q}$. We claim that without loss of generality, we may assume $1,\xi_1,\cdots,\xi_k$ are linearly independent over $\mathbb{Q}$. Indeed, given $r,s\in \{1,\cdots,d-1\}$ with $r<s$, we denote by $\sigma_{rs}$ the permutation matrix in $SL_d(\mathbb{R})$, i.e., the $r$-th column of $\sigma_{rs}$ is $\pm e_s$ and $s$-th column is $\pm e_r$, while $i$-th column of $\sigma_{rs}$ is $e_i$ for $i\neq r,s$. Here we include $\pm$ sign to ensure that $\sigma_{rs}\in \rm{SL}_d(\mathbb{Z})$. Thus, up to sign, acting on the left (right, resp.) on a $d\times d$ matrix, $\sigma_{rs}$ permutes $r$-th and $s$-th rows (columns, resp.) of this matrix. Moreover, for any $g\in SL_d(\mathbb{R})$, $g\sigma_{rs}\mathbb{Z}^d=g\mathbb{Z}^d$ and $\sigma_{rs} \mathcal{C}=\mathcal{C}$. Therefore, for any $t\geq 0$ and any $i\in \{1,\cdots,d\}$, we have
\[\lambda_i(a^{\omega}_t u(x) \mathbb{Z}^d)=\lambda_i(\sigma_{rs}^{-1}a^{\omega}_t\sigma_{rs}\sigma^{-1}_{rs}u(x)\sigma_{rs} \mathbb{Z}^d).\]

As a consequence, conjugating by {a} permutation matrix does not change the successive minima, thus {keeps} the connectedness property. Depending on $n_1,\cdots,n_k$, we may choose finitely many permutation matrices $\sigma_{rs}$ for $r,s\in \{1,\cdots,d-1\}$ and set $\sigma$ to be the product of them such that the first $k$ rows of $\sigma^{-1}u(x)\sigma$ is
\[\begin{pmatrix}
       1 && & & \xi_{n_1}\\
          &\ddots & & & \vdots\\
        & &1 & \cdots   & \xi_{n_k}
\end{pmatrix}.\]
We may write $\sigma^{-1} a^{\omega}_t \sigma=a^{\sigma(\omega)}_t$ for $\sigma(\omega)=(\omega_{\sigma(1)},\cdots,\omega_{\sigma(d-1)},1)$, where we denote $\sigma(i), 1\leq i\leq d-1$ to be the corresponding permutation of the set $\{1,\cdots,d-1\}$ under $\sigma$. Although $\sigma(\omega)$ may no longer be a weight vector by definition, assume we have shown that for any $k$-dimensional rational linear subspace $W$, there exists $I(W)\subset \{1,\cdots,d-1\}$ such that the $e_{I(W)}$-component of $\sigma^{-1}u(x)\sigma v_W$ is nonzero. Then for any $k$-dimensional rational linear subspace $W$, as $a^{\omega}_t u(x)v_W=\sigma \sigma^{-1}a^{\omega}_t u(x)\sigma v_{\sigma^{-1}W}$, the $e_{\sigma I(\sigma^{-1}W)}$-component of $u(x)v_W$ is nonzero, so $x$ is not contained in any proper weakly rational constraining $k$-pencil. Hence, we can assume that $1,\xi_1,\cdots,\xi_k$ are linearly independent over $\mathbb{Q}$.

To continue, we note that $\det v_W(J)\in \mathbb{Z}$ for any $J$, where $J\subset\{1,\cdots, d\}$ is of cardinality $k$. Moreover, there exists at least one $J$ of cardinality $k$ such that $\det v_W(J)\neq 0$. By Lemma \ref{Lemma: coefficient of e_I}, we divide the analysis into two cases depending on the values of $\det v_W(J)$.

Case I: For all $J$ with $d\in J$, $\det v_W(J)=0$. Then by Lemma \ref{Lemma: coefficient of e_I}, we may write
    \begin{align*}
        A v_W=\sum_{I, d\notin I} (\sum_{J,d\notin J}\det A(I,J)\det v_W(J))e_I
    \end{align*}
As $Av_W\neq 0$, there exists some $I_0\subset \{1,\cdots,d-1\}$ such that the $e_{I_0}$-component is nonzero.

Case II: There exists some $J$ with $d\in J$ such that $\det v_W(J)\neq 0$.

Consider the following family of subsets:
\begin{align*}
    \mathcal{J}=\{J\subset \{1,\cdots,d\}: \# J=k, d\in J, \text{ and } \det v_W(J)\neq 0\}.
\end{align*}
By the assumption of Case II, $\mathcal{J}$ is nonempty.
Let $J_0$ be the smallest element in $\mathcal{J}$ with respect to the order $\prec_k$ (see Definition \ref{Definition: an order of subsets}). We write 
\[J_0=\{j_1<\cdots<j_{k-1}<d\}.\]

Now we choose $I_0 \subset \{1,\cdots,d-1\}$ of cardinality $k$ such that the $e_{I_0}$-component in $Av_W$ is nonzero as follows.
Let $i_0$ be the smallest element in $ \{1,\cdots,d-1\}\setminus \{j_1,\cdots,j_{k-1}\}$, then necessarily $1\leq i_0\leq k$. Note that one and only one of the following three situations holds: (i) $i_0<j_1$, in which case we must have $i_0=1$; (ii) $i_0>j_{k-1}$, in which case we have $i_0=k$ and $j_i=i$ for $1\leq i\leq k-1$; (iii) there exists $s\in \{1,\cdots,k-2\}$ such that $j_s<i_0<j_{s+1}$, in which case we have $j_s<k$.

In any of the above situations, we set $I_0=(J_0\setminus \{d\})\cup \{i_0\}$, i.e.
\[I_0=\{j_1<\cdots<j_s<i_0<j_{s+1}<\cdots<j_{k-1}\}.\]
We claim that $e_{I_0}$-component is nonzero. To prove the claim, we investigate those $J$ such that $\det A(I_0,J)\neq 0$. That is, by (\ref{align: value of det A(I,J)}), those $J$ satisfying $J=I_0$, or $d\in J$ and $\# I_0\setminus J=1$. Depending on the choice of $J$, we have the following:
\begin{itemize}
    \item [(1)] $J=I_0$. Then $\det A(I_0,I_0)=1$. We set $c_0=\det v_W(I_0)$.

    \item[(2)] $d\in J$ and $I_0\setminus J=\{j_r\}$ for some $j_r<i_0$. Then $\det A(I_0,J)=\pm \xi_{j_r}$. Note that by the choice of $i_0$, we have $j_r<k$. We set $c_{j_r}=\rm{sgn}(\det A(I_0,J))\det v_W(J)$.

    \item[(3)] $d\in J$ and $I_0\setminus J=\{j_r\}$ for some $j_r>i_0$. Then $\det A(I_0,J)=\pm \xi_{j_r}$. Since $J\prec_k J_0$, the choice of $J_0$ implies $\det v_W(J)=0$.
    
    \item[(4)] $J=J_0$, then $\det A(I_0,J_0)=\pm \xi_{i_0}$. By assumption we have $\det v_W(J_0)\neq 0$. We set $c_{i_0}=\rm{sgn}(\det A(I_0,J_0))\det v_W(J_0)$.

\end{itemize}
Hence by Lemma \ref{Lemma: coefficient of e_I}, the coefficient of $e_{I_0}$ is 
\[\sum_J \det A(I_0,J)\det v_W(J)= c_0+\sum_{i=1}^k c_i \xi_i,\]
where $c_0,\cdots,c_k\in \mathbb{Z}$. Note that $i_0\leq k$ and $c_{i_0}=\det v_W(J_0)\neq 0$. By linear independency of $1,\xi_1,\cdots,\xi_k$ over $\mathbb{Q}$, $e_{I_0}$-component is nonzero and he corollary follows.

\end{proof}

\begin{proof}[Proof of Corollary \ref{Corollary: connectedness in the dual version}]
The proof is similar to that of Corollary \ref{Corollary: connectedness in 1 by d}.

Let $A=u(y)$, that is,
\begin{align*}
   A=\begin{pmatrix}
        1& \xi_1 &\cdots& \xi_{d-1}\\
         &  1 &  &\\
         & & \ddots &\\
         &         &  & 1
    \end{pmatrix}.
\end{align*} 
Given $k\in \{1,\cdots,d-1\}$, it is straightforward to verify the following, where $I,J$ below are subsets of $\{1,\cdots,d\}$ of cardinality $d-k$:
\begin{itemize}
    \item[1)] If $1\notin I$, then 
\begin{align*}
    \det A(I,J)=\begin{cases}
        1 & \text{ if } J=I\\
        0 & \text{ if } J\neq I
    \end{cases}.
\end{align*}

\item[2)] If $1\in I$, then
\begin{align*}
    \det A(I,J)=\begin{cases}
        1 & \text{ if } J=I\\
        0 & \text{ if } 1\in J, J\neq I \text{ or }1\notin J, \# J\setminus I \geq 2\\
        \pm \xi_{j_s -1}& \text{ if } 1\notin J, \{j_s\}=J\setminus I
    \end{cases}.
\end{align*}
\end{itemize}

By the similar reduction made in the proof of Corollary \ref{Corollary: connectedness in 1 by d}, we may assume that $1,\xi_1,\cdots,\xi_k$ are linearly independent over $\mathbb{Q}$. Let $W$ be a $(d-k)$-dimensional subspace spanned by vectors of $\mathbb{Z}^d$. Then there exist $d-k$ linearly independent vectors $w_1,\cdots, w_{d-k}\in \mathbb{Z}^d$ such that $W$ is spanned by $ w_1,\cdots,  w_{d-k}$. Set $v_W=w_1\wedge\cdots \wedge w_{d-k}\in \bigwedge^{d-k} \mathbb{R}^d$. We want to show that there exists $I\subset \{1,\cdots, d\}$ of cardinality $d-k$ such that $1\in I$ and the $e_I$-component of $Av_W$ is nonzero.

First we note that $\det v_W(J)\in \mathbb{Z}$ for any $J$, where $J\subset\{1,\cdots, d\}$ is of cardinality $d-k$. Moreover, there exists at least one $J$ of cardinality $d-k$ such that $\det v_W(J)\neq 0$. By Lemma \ref{Lemma: coefficient of e_I}, we divide the analysis into two cases depending on the values of $\det v_W(J)$.

Case I: For all $J$ such that $1\notin J$, $\det v_W(J)=0$. Then for any $J$ with $1\in J$ we have 
\begin{align*}
    \det A(I,J)=\begin{cases}
        0 & \text{ if } I\neq J \\
        1 & \text{ if } I=J
    \end{cases}.
\end{align*}
By Lemma \ref{Lemma: coefficient of e_I}, we may write
\begin{align*}
    A v_W= \sum_{I=J,1\in J} \det A(I,J)\det v_W(J) e_I.
\end{align*}
As $Av_W\neq 0$, there exists $I=J$ with $1\in J$ such that the $e_I$-component is nonzero.

Case II: There exists $J$ such that $1\notin J$ and $\det v_W(J)\neq 0$. Consider
\[\mathcal{J}=\{J\subset \{1,\cdots,d\}: \# J=d-k, 1\notin J, \det v_W(J)\neq 0\}.\]
By the assumption of Case II, $\mathcal{J}$ is nonempty. Let $J_0\in \mathcal{J}$ be the largest element with respect to the order $\prec_{d-k}$. We may write 
\[J_0=\{j_1<\cdots<j_{d-k}\}.\]
Then since $1\notin J_0$, we have 
\[2\leq j_1\leq \cdots<j_{d-k}\leq d.\]
Therefore, $j_1\leq k+1$.
Let $I_0=(J_0\setminus\{j_1\})\cup \{1\}$, i.e.
\[I_0=\{1<j_2<\cdots<j_{d-k}\}.\]
We claim that the $e_{I_0}$-component of $Av_W$ is nonzero. To prove this claim, we investigate those $J$ such that $\det A(I_0,J)\neq 0$. That is, those $J$ such that $J=I_0$, or $1\notin J$ and $\# J\setminus I_0=1$. Depending on the choice of $J$, we have the following:
\begin{itemize}
    \item[(1)] $J=I_0$. Then $\det A(I_0,J)=1$.

    \item[(2)] $J=(I_0\setminus \{1\})\cup \{j_0\}$ for some $j_0\leq j_1$. Then $\det A(I_0,J)=\pm \xi_{j_0-1}$. In particular $j_0\leq k+1$.

    \item[(3)]  $J=(I_0\setminus \{1\})\cup \{j_0\}$ for some $j_0 >j_1$. Then $J_0\prec_{d-k} J$. By the choice of $J_0$, this implies $\det v_W(J)=0$.
\end{itemize}
Therefore, the coefficient of $e_{I_0}$ is 
\[\sum_J \det A(I_0,J)\det v_W(J)= c_0+\sum_{i=1}^k c_i \xi_i,\]
where $c_i\in \mathbb{Z}$ and $c_{j_1 -1}\neq 0$. By linear independency of $1,\xi_1,\cdots,\xi_k$ over $\mathbb{Q}$, the $e_{I_0}$-component is nonzero. This completes the proof.

\end{proof}

\begin{proof}[Proof of Proposition \ref{Proposition: matrix connectedness}]
(a) Let $k\in \{m,\cdots,d-1\}$ and $W\subset \mathbb{R}^d$ be a $k$-dimensional rational linear subspace. We want to show that there exists $I\subset \{1,\cdots,d\}$ of cardinality $k$ such that $\{1,\cdots,m\}\subset I$ and the coefficient of $e_I$ in $u(x)v_W\in \bigwedge^k \mathbb{R}^d$ is nonzero. First let $A=u(x)$. Then it is straightforward to verify that for any $I,J\subset \{1,\cdots,d\}$ of cardinality $k$ and $\{1,\cdots,m\}\subset I$, we have 
\begin{align*}
\det A(I,J)=\begin{cases}
    1 & \text{ if } I=J\\
  \pm \det(x_{i,j})_{i\in I\setminus J,j+m\in J\setminus I} & \text{ if } \emptyset \neq I\setminus J\subset \{1,\cdots,m\}\\
  0 & \text{ if } I\setminus J\not\subset \{1,\cdots,m\}
\end{cases}
\end{align*}
When $\emptyset \neq I\setminus J\subset \{1,\cdots,m\}$, the sign of $\det(x_{i,j})_{i\in I\setminus J,j+m\in J\setminus I}$ depends on $I$ and $J$. We first observe that if $I\setminus J\subset \{1,\cdots,m\}$, then $\# I\setminus J=\# J\setminus I\leq \min\{m,d-k\}$. This is because $I\setminus J\subset \{1,\cdots,m\}$ would imply $\#I\setminus J=\# J\setminus I\leq m$. Moreover, $\# I\setminus J\leq d-k$. For otherwise, we have $\# I\cup J>d$, which is a contradiction. We may choose an integral basis of $W$ such that for any $J\subset \{1,\cdots,d\}$ of cardinality $k$, $\det v_W(J)\in \mathbb{Z}$. We divide the analysis into two cases depending on $\det v_W(J)$.

Case I: For all $J\subset \{1,\cdots,d\}$ of cardinality $k$ such that $\{1,\cdots,m\}\not\subset J$, $\det v_W(J)=0$. As $\dim W=k$, there exists $J_0\subset \{1,\cdots,d\}$ of cardinality $k$ such that $\{1,\cdots,m\}\subset J_0$ and $\det v_W(J_0)\neq 0$. Let $I_0=J_0$, then $\det A(I_0,J_0)=1$. The coefficient of $e_{I_0}$ in $u(x)v_W$ is 
\[\det v_W(J_0)+\sum_{J:I\setminus J\subset \{1,\cdots,m\}} \det A(I_0,J)\det v_W(J).\]
Let $\mathcal{R}=\{j-m:j\in \{1,\cdots,d\}\setminus I_0\}\subset \{1,\cdots,n\}$. Then $\# \mathcal{R}\leq d-k$. By the assumption on $\mathcal{D}(\mathcal{R})$, the coefficient of $e_{I_0}$ is nonzero, and we are done.

Case II: There exists $J_0\subset \{1,\cdots,d\}$ of cardinality $k$ such that $\{1,\cdots,m\}\not\subset J_0$ and $\det v_W(J_0)\neq 0$. We choose arbitrary $k-m$ elements $j_1,\cdots,j_{k-m}\in J_0\setminus \{1,\cdots,m\}$ and let $I_0=\{1,\cdots,m\}\cup\{j_1,\cdots,j_{k-m}\}$. Then $\# I_0=k$, $I_0\neq J_0$ and $I_0\setminus J_0\subset \{1,\cdots,m\}$. The coefficient of $e_{I_0}$ in $u(x)v_W$ is
\[\det v_W(I_0,J_0)\det A(I_0,J_0)+\det v_W(I_0)+\sum \det A(I_0,J)\det v_W(J),\]
where the last summation on the far right runs through all $J$ of cardinality $k$ such that $J\neq J_0$ and $I\setminus J\subset \{1,\cdots,m\}$.
Let $\mathcal{R}=\{j-m:j\in \{1,\cdots,d\}\setminus I_0\}\subset \{1,\cdots,n\}$. Then $\# \mathcal{R}\leq d-k$. By the assumption on $\mathcal{D}(\mathcal{R})$, the coefficient of $e_{I_0}$ is nonzero. This finishes the proof of (a) in the proposition.

(b) Let $k\in \{1,\cdots,m\}$ and $W$ be a $k$-dimensional rational linear subspace of $\mathbb{R}^d$. Let $A=u(x)$. For $I\subset \{1,\cdots,m\},J\subset \{1,\cdots,d\}$ such that $\#I=\#J=k$, it is straightforward to verify that 
\begin{align*}
    \det A(I,J)=\begin{cases}
        1 & \text{ if } I=J\\
    \pm \det(x_{i,j})_{i\in I\setminus J, j+m\in J\setminus I}& \text{ if } J\setminus I\subset \{m+1,\cdots,d\}\\
    0 & \text{ if } J\setminus I\not\subset \{m+1,\cdots,d\}
    \end{cases}
\end{align*}
We observe that $\# I\setminus J=\# J\setminus I\leq \min\{m,d-k\}$. This is because $I\subset \{1,\cdots,m\}$ and $\# I\setminus J> d-k$ would yield $\# I\cup J>d$. As in (a), we divide our analysis into two cases.

Case I: For all $J\subset \{1,\cdots,d\}$ of cardinality $k$ and $J\subset \{1,\cdots,m\}$, $\det v_W(J)=0$. Then there exists $J_0\subset \{1,\cdots,m\}$ of cardinality $k$ such that $\det v_W(J_0)\neq 0$. Let $I_0=J_0$, then the coefficient of $e_{I_0}$ in $u(x)v_W$ is 
\[\det v_W(J_0)+\sum \det A(I_0,J)\det v_W(J),\]
where the summation runs through all $J\subset \{1,\cdots,d\}$ of cardinality $k$ such that $J\neq J_0$ and $J\setminus I_0\subset \{m+1,\cdots,d\}$. By the assumption on $\mathcal{D}(\mathcal{R})$, the coefficient of $e_{I_0}$ is nonzero.

Case II: There exists $J_0\subset \{1,\cdots,d\}$ of cardinality $k$ such that $J_0\not\subset \{1,\cdots,m\}$ and $\det v_W(J_0)\neq 0$. Choose any $I_0\subset \{1,\cdots,m\}$ of cardinality $k$ such that $J_0\cap \{1,\cdots,m\}\subset I_0$. Then $J_0\subset I_0\subset \{m+1,\cdots,d\}$. The coefficient of $e_{I_0}$ in $u(x)v_W$ is 
\[\det A(I_0,J_0)\det v_W(J_0)+\sum \det A(I_0,J)\det v_W(J),\]
where the summation runs through all $J\subset \{1,\cdots,d\}$ of cardinality $k$ such that $J\neq J_0$ and $J\setminus I_0\subset \{m+1,\cdots,d\}$. By the assumption on $\mathcal{D}(\mathcal{R})$, we conclude that the coefficient of $e_{I_0}$ is nonzero. This finishes the proof of (b).
\end{proof}

\noindent$\textbf{Acknowledgment.}$
We thank Nicolas de Saxc\'e for helpful discussions and the anonymous referee for suggestions improving the paper. Han Zhang acknowledges the support of startup grant (grant no. NH10701623) of Soochow University.

\bibliography{references}
\bibliographystyle{plain}

\end{document}